\newtheorem{Theorem}{Theorem}[section]
\newtheorem{Lemma}[Theorem]{Lemma}
\newtheorem{Corollary}[Theorem]{Corollary}
\newtheorem{Proposition}[Theorem]{Proposition}
\newtheorem{Conjecture}[Theorem]{Conjecture}
\newtheorem{Conj}{Conjecture}
\let\epsilon\varepsilon
\def\qed{\ifhmode\textqed\fi
\ifmmode\ifinner\hfill\quad\qedsymbol\else\dispqed\fi\fi}
\def\textqed{\unskip\nobreak\penalty50
\hskip2em\hbox{}\nobreak\hfill\qedsymbol
\parfillskip=0pt \finalhyphendemerits=0}
\def\dispqed{\rlap{\qquad\qedsymbol}}
\def\QQ{\mathbb{Q}}
\def\mm{\mathfrak{m}}
\def\T{\textup{T}}
\def\H{\textup{H}}
\def\Tor{\textup{Tor}}
\def\Ker{\textup{Ker}}
\def\Coker{\textup{Coker}}
\def\Im{\textup{Im}}
\def\height{\textup{height}}
\def\rank{\textup{rank}}
\def\Der{\textup{Der}}
\def\Mod{\textup{Mod}}
\begin{document}
\title{Cotangent functors and\\ Herzog's last theorem}

\dedicatory{Dedicated to the memories of Professors J\"urgen Herzog and Wolmer Vasconcelos}

\author{Antonino Ficarra}

\address{Antonino Ficarra, Departamento de Matem\'{a}tica, Escola de Ci\^{e}ncias e Tecnologia, Centro de Investiga\c{c}\~{a}o, Matem\'{a}tica e Aplica\c{c}\~{o}es, Instituto de Investiga\c{c}\~{a}o e Forma\c{c}\~{a}o Avan\c{c}ada, Universidade de \'{E}vora, Rua Rom\~{a}o Ramalho, 59, P--7000--671 \'{E}vora, Portugal}
\email{antonino.ficarra@uevora.pt}\email{antficarra@unime.it}

\maketitle\vspace*{-0.6cm}

\begin{abstract}
	We present the theory of cotangent functors following the approach of Palamodov, and a conjecture of Herzog relating the vanishing of certain cotangent functors to the property of being a complete intersection.
\end{abstract}

\section*{Preface}

The present article is an attempt to complete Jurgen Herzog's unfinished last paper. I thank Maja, Susanne and Ulrike (Herzog's wife and daughters) for handing me over the manuscript of their father. This paper was being written by Professor Herzog to honor the memory of Professor Wolmer Vasconcelos. It is my intention to honor both Professors Herzog and Vasconcelos, whose works have influenced reaserchers in Commutative Algebra for over the past fifty years.

\tableofcontents\vspace*{-1cm}
\section*{Introduction}

During the 1960s and 1970s, the study of the module of differentials was a central topic in Commutative Algebra, particularly in Germany. The book of Kunz \cite{KuBook} reflects this trend. The module of K\"ahler differentials is defined as a construction satisfying a fundamental universal mapping property, making the corresponding functor representable. The fascination with the subject was partly driven by several famous, and still open conjectures, namely the Zariski-Lipman's conjecture, the Berger's conjecture and the Vasconcelos's conjecture. For a nice summary about what is known about these conjectures we refer the reader to Herzog's survey \cite{H94}. See also the papers \cite{HMM,MM,Mi} for some recent progress on Berger's conjecture.

Another notable and related question again posed by Vasconcelos \cite{V} somehow parallels the previously mentioned conjecture due to him.
\begin{Conj}\label{ConjA}
	Let $R$ be a Noetherian ring and $I\subset R$ be an ideal such that the conormal module $I/I^2$ has finite projective dimension as an $R/I$-module. Then $I$ is a complete intersection.
\end{Conj}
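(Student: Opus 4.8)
The plan is to route everything through the cotangent functors $\T_n(-,-)$, translating both the hypothesis and the conclusion into cotangent homology. Since being a complete intersection is a local property and finite projective dimension localizes, I would first reduce to the case of a Noetherian local ring $(R,\mm)$, and then pass to the completion $\widehat{R}$---a faithfully flat map that preserves the conormal module, its projective dimension, and the complete-intersection property---so that I may assume $(R,\mm)$ is complete. Writing $S=R/I$, the foundational identifications of the theory give $\T_0(S/R,S)=\Omega_{S/R}=0$ and $\T_1(S/R,S)=I/I^2$, while $I$ is a complete intersection precisely when $\T_2(S/R,S)=0$. The conjecture is thereby reduced to the single implication
\begin{equation*}
\operatorname{pd}_S\big(\T_1(S/R,S)\big)<\infty \ \Longrightarrow\ \T_2(S/R,S)=0 .
\end{equation*}

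Next I would exploit Cohen's structure theorem to write $R=Q/J$ with $(Q,\mathfrak{n})$ regular local, whence $S=Q/K$ for some ideal $K\supseteq J$. The Jacobi--Zariski transitivity sequence attached to $Q\to R\to S$,
\begin{equation*}
\cdots\to \T_n(R/Q,S)\to \T_n(S/Q,S)\to \T_n(S/R,S)\to \T_{n-1}(R/Q,S)\to\cdots,
\end{equation*}
expresses the terms $\T_\bullet(S/R,S)$ in terms of the two endpoints $\T_\bullet(S/Q,S)$ and $\T_\bullet(R/Q,S)$, both computed over the regular ring $Q$, where cotangent homology is governed by Koszul data and is comparatively rigid. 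The aim of this step is to pin $\T_2(S/R,S)$ between controlled terms and to recast the obstruction to its vanishing as a condition on free resolutions over $S$.

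The decisive step is a rigidity principle: I would try to show that finite projective dimension of the conormal module $\T_1(S/R,S)=I/I^2$ forces $\T_n(S/R,S)=0$ for all $n\ge 2$. The natural instrument is the homotopy Lie algebra of the map $R\to S$, whose low-degree components are controlled by $I/I^2$ and by the obstruction module $\T_2(S/R,S)$, and whose bracket ties the cotangent homology modules together. The hoped-for mechanism is that a nonzero $\T_2(S/R,S)$ would, through this Lie structure, generate an infinite cascade of nonvanishing higher invariants and hence an infinite minimal $S$-free resolution of $I/I^2$, in contradiction with the standing finiteness hypothesis.

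I expect this rigidity to be the crux, and it is exactly the point at which the conjecture remains open in general. Finite projective dimension of $I/I^2$ is a statement about its minimal $S$-free resolution, whereas $\T_2(S/R,S)$ records second-order deformation data; no functorial arrow forces one to control the other, so the bridge must be constructed by hand through the Lie-algebra or spectral-sequence bookkeeping sketched above. When $I/I^2$ is $S$-free---the Ferrand--Vasconcelos theorem---or when $R$ is itself regular, the endpoint terms in the transitivity sequence degenerate and the rigidity becomes accessible; the genuine obstacle is to carry the argument through without either of these simplifications.
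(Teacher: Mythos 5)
There is no ``paper's proof'' to compare against here: the paper states Conjecture~\ref{ConjA} as a conjecture and attributes its resolution to Briggs \cite{BB}, so your proposal must stand on its own --- and it does not. You yourself flag the ``rigidity principle'' (finite projective dimension of $I/I^2$ forces vanishing of higher cotangent homology) as ``exactly the point at which the conjecture remains open in general''; that step is not a technical lemma deferred to the reader, it \emph{is} the theorem, so nothing is proved. Moreover, the reduction that precedes it rests on a false equivalence: it is not true that $I$ is a complete intersection precisely when $\T_2(S/R,S)=0$. With coefficients in $S$ alone, $\T_2(S/R,S)$ is the Simis--Vasconcelos invariant whose vanishing defines \emph{syzygetic} ideals, and these need not be complete intersections. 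Concretely, take $I=(x^2,xy)\subset R=k[x,y]$: here $\Tor_2^R(S,S)\cong k$ is generated by the class of the Koszul syzygy of $x^2,xy$, so the map $\bigwedge^2 I/I^2\rightarrow\Tor_2^R(S,S)$ in Proposition~\ref{wedget} is surjective and $\T_2(S/R,S)=0$ (also $\T_3(S/R,S)=0$ by Corollary~\ref{t3H}), yet $\height(I)=1<2=\mu(I)$. The correct Lichtenbaum--Schlessinger criterion requires $\T_2(S/R,M)=0$ for \emph{all} $M$; and if instead you target the vanishing of $\T_i(S/R,S)$ for all $i\ge 2$, the passage back to ``complete intersection'' is precisely Conjecture~\ref{conj} of this paper, which is open. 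So your first step either uses a false statement or trades one open conjecture for another.

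Second, your appeal to Ferrand--Vasconcelos drops an essential hypothesis, and the omission is fatal to the entire strategy. That theorem requires $\operatorname{pd}_R I<\infty$ \emph{in addition} to freeness of $I/I^2$: for $R=k[x]/(x^4)$ and $I=(x^2)$ one has $I^2=0$ and $I/I^2\cong R/I$ free, yet $I$ is not a complete intersection ($\height(I)=0<1=\mu(I)$, and every generator of $I$ is nilpotent). The same example shows that the statement you set out to prove --- and, as printed, Conjecture~\ref{ConjA} itself --- is false without the additional assumption $\operatorname{pd}_R(R/I)<\infty$; this finiteness over $R$ is part of Vasconcelos' actual conjecture and of Briggs' theorem \cite{BB}. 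Consequently any correct argument must use projective-dimension finiteness over $R$ in an essential way, and your outline never does: the localization, completion, Cohen-presentation and Jacobi--Zariski steps all go through verbatim in the counterexample above, so no later filling-in of the ``rigidity'' step can rescue the plan as written. For what it is worth, the toolbox you point to --- the homotopy Lie algebra of $R\rightarrow S$ --- is indeed the one Briggs uses; but the substance of his proof is exactly the bridge you leave unbuilt, and it is built on the hypothesis your outline forgot.
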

Recently, Benjamin Briggs \cite{BB} resolved Conjecture \ref{ConjA} in full generality.\smallskip

For any $R$-algebra homomorphism $R\rightarrow S$ and any $S$-module $M$, there exist modules $\T^i(S/R,M)$ and $\T_i(S/R,M)$ for $i=0,1,\dots$, the so-called \textit{tangent} and
\textit{cotangent modules}. They are functorial in all three variables. In 1967, Lichtenbaum and Schlessinger \cite{LS} first introduced the functors $\T^i$ for $i=0,1,2$. Later on, Quillen \cite{Qu} in 1970 and Andr\'{e} \cite{An} in 1974 defined the higher cotangent functors and developed their theory. In characteristic 0, a different and simpler approach was given by Palamodov \cite{P} by employing DG-algebras. We present the approach of Palamodov in Section \ref{sec2}.\smallskip

In 1981, J\"urgen Herzog posed the following conjecture, which remains open.
\begin{Conj}\label{ConjB}
	\textup{(Herzog's Last Theorem).} Let $R$ be a regular local ring, $I\subset R$ a proper ideal and $S=R/I$. Then the following conditions are equivalent:
	\begin{enumerate}
		\item[(a)] $I$ is a complete intersection.
		\item[(b)] $\T_i(S/R, S)=0$ for all $i>1$.
		\item[(c)] $\T_i(S/R, S)=0$ for all $i\gg1$.
	\end{enumerate}
\end{Conj}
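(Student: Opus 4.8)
The plan is to treat the three implications separately, the equivalence $(\mathrm{b})\Leftrightarrow(\mathrm{c})$ being largely formal and the implication $(\mathrm{c})\Rightarrow(\mathrm{a})$ carrying all the real content. Throughout I write $L=L_{S/R}$ for the cotangent complex furnished by Palamodov's construction, so that $\T_i(S/R,M)=H_i(L\otimes_S M)$ with $L$ a bounded-below complex of free $S$-modules concentrated in homological degrees $\ge 1$. Since $R\to S$ is surjective one has $\T_0(S/R,S)=\Omega_{S/R}=0$ and $\T_1(S/R,S)=I/I^2$, the conormal module. Thus the whole problem is to decide when the higher homology of $L$ vanishes, and I would aim to reduce everything to the freeness of $I/I^2$ over $S$.

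For $(\mathrm{a})\Rightarrow(\mathrm{b})$: if $I=(f_1,\dots,f_c)$ with $f_1,\dots,f_c$ a regular sequence, then the Koszul (Tate) resolution is already a DG-algebra resolution whose algebra generators sit only in homological degree $1$. In Palamodov's formalism this forces $L\simeq (I/I^2)[1]$ with $I/I^2\cong S^{c}$ free, whence $\T_i(S/R,M)=0$ for every $i\ge 2$ and every $M$; specializing $M=S$ gives $(\mathrm{b})$. The implication $(\mathrm{b})\Rightarrow(\mathrm{c})$ is immediate.

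The crux is $(\mathrm{c})\Rightarrow(\mathrm{a})$, which I would attack in three steps. First, a rigidity step: upgrade the hypothesis $\T_i(S/R,S)=0$ for $i\gg 1$ to $\T_i(S/R,S)=0$ for all $i\ge 2$, so that $(\mathrm{c})$ collapses to $(\mathrm{b})$. Second, granting $(\mathrm{b})$, the homology of $L$ is concentrated in degree $1$, so $L\simeq (I/I^2)[1]$ in the derived category and the universal-coefficient (hyper-$\Tor$) spectral sequence degenerates to give
\[
\T_n(S/R,M)\;\cong\;\Tor^S_{n-1}(I/I^2,M)\qquad\text{for all }n\text{ and all }M.
\]
Third, I would try to deduce that $I/I^2$ is a free $S$-module, after which the Ferrand--Vasconcelos theorem (applicable since $\operatorname{pd}_R S<\infty$ as $R$ is regular) yields that $I$ is a complete intersection.

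The hard part, and the reason the conjecture remains open, is precisely the passage from $S$-coefficients to freeness. The identity above with $M=S$ reads $\T_n(S/R,S)\cong\Tor^S_{n-1}(I/I^2,S)$, which is vacuous because $S$ is flat over itself; hence $(\mathrm{b})$ and $(\mathrm{c})$ exert no direct control over $\Tor^S_{\ge 1}(I/I^2,k)$, whose vanishing is exactly freeness of $I/I^2$. Equivalently one must bridge from vanishing with coefficients in $S$ to vanishing with coefficients in the residue field $k$, where the deviation and homotopy-Lie-algebra machinery characterizes complete intersections. A natural way to close the gap would be to prove that $(\mathrm{c})$ forces $\operatorname{pd}_S(I/I^2)<\infty$, for then Briggs's theorem (Conjecture \ref{ConjA}) finishes immediately; controlling this projective dimension, together with the rigidity step, is where I expect the main obstacle to lie.
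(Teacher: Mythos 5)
You were asked to prove Conjecture \ref{conj}, which the paper itself declares to be widely open: the paper contains no unconditional proof, only the conditional Theorem \ref{true}, so the gap you flag in $(\mathrm{c})\Rightarrow(\mathrm{a})$ \emph{is} the conjecture itself and not an oversight on your part. What you do establish is correct and agrees with the paper: $(\mathrm{a})\Rightarrow(\mathrm{b})$ holds because the Koszul complex on a minimal generating set of a complete intersection is already a resolvent, so $L$ is a free $S$-module concentrated in homological degree $1$; $(\mathrm{b})\Rightarrow(\mathrm{c})$ is trivial; and your observation that under $(\mathrm{b})$ one gets $\T_n(S/R,M)\cong\Tor^S_{n-1}(I/I^2,M)$ for all $M$ is a sound consequence of equation (\ref{eq:LT}), since $L$ is then a complex of free $S$-modules quasi-isomorphic to $I/I^2$ placed in degree $1$.

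Where your proposal and the paper part ways is the strategy for $(\mathrm{c})\Rightarrow(\mathrm{a})$ and what partial progress each can actually deliver. Your route runs through the conormal module: a rigidity step upgrading $(\mathrm{c})$ to $(\mathrm{b})$, then finiteness of $\operatorname{pd}_S(I/I^2)$ or freeness, then Ferrand--Vasconcelos or Briggs \cite{BB}; neither of the first two steps is known to follow from $(\mathrm{c})$, as you say. The paper's Theorem \ref{true} instead assumes that every finitely generated $S$-module has a rational Poincar\'e series and argues numerically, and its mechanism supplies precisely the $S$-to-$K$ bridge you identified as missing: for a \emph{minimal} resolvent one has $\partial(L)\subseteq\mm L$, hence $\rank_S L_i=\dim_K\T_i(S/R,K)$, and via the Zariski sequence for $R\rightarrow S\rightarrow K$ and Gulliksen's theorem \cite{Gu} these ranks are the deviations $\epsilon_i$ of $S$. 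Hypothesis $(\mathrm{c})$ then says the tail of $L$ is a minimal free $S$-resolution of $M=\Coker(L_{i_0}\rightarrow L_{i_0-1})$, so that $\sum_{i\ge i_0}\epsilon_it^i$ is the Poincar\'e series $P_M(t)$ of an honest module, hence rational; combining this with the rationality of $P_K(t)$, Gulliksen's product formula expressing $P_K(t)$ through the deviations, a logarithmic-derivative computation, and Mahler's theorem (Theorem \ref{Thm:Ma}) on the vanishing pattern of coefficients of rational series (here the relevant coefficients vanish, for $i\gg0$ odd, exactly at primes, which is not an eventually periodic set), one concludes that the deviations vanish eventually, whence $I$ is a complete intersection by \cite{Gu1}. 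In short, the paper converts $(\mathrm{c})$ into a statement about Betti numbers of an auxiliary module $M$ rather than about $I/I^2$; that conversion is unconditional and is the piece of leverage your outline lacks, while removing the rationality hypothesis is what remains genuinely open.
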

We would like to refer to this conjecture as \textit{Herzog's Last Theorem}, both as a playful nod to Fermat's Last Theorem, but also because it represents the last mathematical question he studied before his passing. We will explain the relation between Conjecture \ref{ConjA} and Conjecture \ref{ConjB} shortly thereafter.

In this note, we present the theory of cotangent functors $\T_i(S/R, M)$ following the approach of Palamodov \cite{P}. Special attention is reserved for the case when $R$ is a regular local ring, $S=R/I$ is the quotient of $R$ by an ideal $I\subset R$ and $M=S$. Herzog's last theorem is formulated and discussed in this frame. 

The outlines of this note are as follows.

In Section \ref{sec1}, we collect basic material on DG-algebras and the module of K\"alher differentials. In Section \ref{sec2} we present the construction of the cotangent functors due to Palamodov \cite{P}. For the remaining part of the paper, we focus our attention on the following special situation. Let $(R,\mm,K)$ be a Noetherian local ring, $I\subset R$ be a proper ideal, $S=R/I$ and $\varphi: R\rightarrow S$ be the canonical epimorphism.

In Section \ref{sec3} we express the cotangent functors $\T_i(S/R,S)$ in terms of the homology of a certain complex $L$. This complex can be regarded as an approximation of a $S$-resolution of the conormal module $I/I^2$. In view of this observation and Conjecture \ref{ConjA}, Herzog was led to speculate that the vanishing of almost all cotangent functors $\T_i(S/R,S)$ is equivalent to the fact that $I$ is a complete intersection. Partial progress towards Conjecture \ref{ConjB} was made by Herzog himself (Theorem \ref{true}) under the extra assumption that all finitely generated $S$-modules have a rational Poincar\'{e} series. Additional results were contributed by Ulrich \cite{Ul}.

\section{DG-algebras and the module of K\"alher differentials}\label{sec1}

In this first section, we give a brief overview on  DG-algebras and on the module of K\"alher differentials.

\subsection{A glimpse to DG-algebras}

Recall that a \textit{differential graded algebra}, or in short a \textit{DG-algebra}, is graded algebra $X=\bigoplus_{k\ge0}X_k$ which is skew-symmetric together with a differential $\partial$ of degree $-1$, satisfying the Leibniz rule:
\[
\partial(ab)=\partial(a)b+(-1)^{\deg a}a\partial(b)
\]
for all $a,b\in X$ with $a$ homogeneous.

An element $z\in X$ such that $\partial(z)=0$ is called a \textit{cycle}. Given any DG-algebra $Y$ and a cycle $z\in Y_i$, the DG-algebra $Y'=Y[T: \partial(T)=z]$, which is obtained by adjoining $T$ in order to kill the cycle $z$, is defined as follows.

We set $\deg T=i+1$. If $i$ is even, then
\[
Y_j'=Y_j\oplus Y_{j-i-1}T\; \text{
	for all $j$, } \; T^2=0 \text{ and } \partial(T)=z,
\]
and if $i$ is odd, then for all $j$ and $k$,
\[
Y_j'=Y_j\oplus Y_{j-(i+1)}T\oplus Y_{j-2(i+1)}T^2\oplus \cdots, \text{ $T^jT^k=T^{j+k}$ and $\partial(T^j)=jzT^{j-1}$.}
\]
With these definitions, $Y'$ is a well-defined DG-algebra and the inclusion $Y\hookrightarrow Y'$ is a homomorphism of DG-algebras.\smallskip

For further details on DG-algebras we refer the reader to Avramov's lectures \cite{Av}.

\subsection{A glimpse to the module of K\"alher differentials} Let $\varphi:R\rightarrow S$ be a ring homomorphism and let $M$ be a $S$-module. Recall that a \textit{derivation}, or more precisely an \textit{$R$-derivation into} $M$ is a map $D:S\rightarrow M$ satisfying:
\begin{enumerate}
	\item[(i)] (Linearity) $D(rs)=rD(s)$ for all $r\in R$ and all $s\in S$.
	\item[(ii)] (Additivity) $D(s_1+s_2)=D(s_1)+D(s_2)$ for all $s_1,s_2\in S$.
	\item[(iii)] (Leibniz rule) $D(s_1s_2)=s_1D(s_2)+s_2D(s_1)$ for all $s_1,s_2\in S$.
\end{enumerate}

Given two $R$-derivations $D,D'$ into $M$ then $D+D':a\in S\mapsto D(a)+D'(a)\in M$ is an $R$-derivation, and for any $c\in S$ the map $cD:a\in S\rightarrow cD(a)\in M$ is also an $R$-derivation. Thus, the set of all $R$-derivations into $M$ is a $S$-module, which we denote by $\Der_R(S,M)$.

Let $\Mod_S$ be the category of $S$-modules. For any homomorphism of $S$-modules $\alpha:M\rightarrow N$ and any $R$-derivation $D\in\Der_R(S,M)$, then $\alpha\circ D\in\Der_R(S,N)$ is again an $R$-derivation. Hence, the map $M\in\Mod_S\mapsto\Der_R(S,M)$ is a functor. This functor is \textit{representable}. That is, there exist a $S$-module $\Omega_{S/R}$ and a map $d:S\rightarrow\Omega_{S/R}$ such that for any $M\in\Mod_S$ and $D\in\Der_R(S,M)$, there exists a unique homomorphism $\alpha:\Omega_{S/R}\rightarrow M$ making the following diagram commutative
\[
\xymatrix{
	S \ar[r]^{d}\ar[d]_{D} & \Omega_{S/R} \ar@{-->}[dl]^{\alpha} \\
	M}
\]

The $S$-module $\Omega_{S/R}$ is called the \textit{module of} (\textit{K\"ahler}) \textit{differentials}.

\section{The cotangent functors}\label{sec2}

Let $R$ be a commutative ring which contains $\QQ$ as a subring. Assigned to a ring homomorphism $\varphi: R\rightarrow S$ and a $S$-module $M$ there is a sequence of $S$-modules $\T_i(S/R,M)$, for $i\ge0$, satisfying the following properties:\smallskip
\begin{enumerate}
	\item[(i)] $\T_0(S/R,M)\cong \Omega_{S/R}\otimes M$, where $\Omega_{S/R}$ is the module of K\"ahler differentials.
	\item[(ii)] Given short exact sequence of $S$-modules $0\rightarrow M_1\rightarrow M_2\rightarrow M_3\rightarrow 0$, there exists a long exact sequence
	\[
	\cdots \to\T_{i+1}(S/R,M_3)\rightarrow \T_{i}(S/R,M_1)\rightarrow \T_{i}(S/R,M_2)\rightarrow \T_{i}(S/R,M_3)\rightarrow \cdots.
	\]
	\item[(iii)] (The Zariski sequence) For a sequence of ring homomorphisms $R\rightarrow S\rightarrow T$ and a $T$-module $M$, there exists a long exact sequence
	\[
	\cdots \rightarrow \T_{i+1}(T/S,M) \rightarrow \T_{i}(S/R,M)\rightarrow \T_{i}(T/R,M) \rightarrow \T_{i}(T/S,M)\rightarrow \cdots.
	\]
\end{enumerate}\smallskip

We give a brief description of the construction of these functors, following the definition of V.P. Palamodov~\cite{P}. One first constructs a \textit{resolvent}, sometimes also called a \textit{Tate resolution}, $X$ of $\varphi$, see \cite{Tate}. The construction of $X$ is obtained step by step, by adjoining variables using the procedure described in Section \ref{sec1}.

Let $\Phi: P=R[\{Z_{\lambda}\}_{\lambda\in \Lambda}]\rightarrow S$ be a polynomial presentation with $\Phi_{|R}=\varphi$ and $\Phi$ an epimorphism. We construct a free acyclic DG-algebra $X$ over $P$ with $\H_0(X)\cong S$, which we call a resolvent of $\varphi$ as described next.

We regard $P$ as a DG-algebra and set $X^{(0)}=P$. Let $I\subset P$ be the kernel of $\Phi$, and let $\{a_\lambda\}_{\lambda\in \Lambda_1}$ be a system of generators of $I$. We adjoin variables
$\{T_{1\lambda}\}_{\lambda\in \Lambda_1}$ of degree $1$ with $\partial(T_{1\lambda})= a_\lambda$ for all
$\lambda\in \Lambda_1$ to obtain the DG-algebra $X^{(1)}$.

In the next step we adjoin sufficiently many variables of degree $2$ to $X^{(1)}$ in order to kill a set of cycles whose homology classes generate $\H_1(X^{(1)})$, and we denote by $X^{(2)}$ the resulting DG-algebra.

Proceeding in this way we obtain an ascending chain
$$
X^{(0)}\subseteq X^{(1)}\subseteq X^{(2)}\subseteq\cdots
$$
of DG-algebras. Then $X=\lim X^{(i)}$ is a resolvent of $\varphi:R\rightarrow S$.

Note that the resolvent is of the form
\begin{equation}\label{eqP}
	X=P[\{T_{i\lambda}:\lambda\in \Lambda_i, i\ge0 \}]\;\,\, \text{with\, $\deg T_{i\lambda}=i$\, for all\, $\lambda\in \Lambda_i$. }
\end{equation}
It is clear that if $R$ is Noetherian and $S$ is a finitely generated $R$-algebra, then $P$ may be chosen a polynomial ring over $R$ in finitely many variables, and the resolvent may be chosen such that $|\Lambda_i|<\infty$ for all $i$. Then $X$ is a free $P$-module resolution of $S$ with all $X_i$ of finite rank.

Let $X$ be a resolvent of $\varphi$. Recall that the module of \textit{K\"alher differentials} $\Omega_{X/R}$ of $X$ over $R$ is an $X$-module together with a $R$-linear derivation $d: X\rightarrow \Omega_{X/R}$ such that the usual universal properties hold. Since $\Omega_{X/R}$ is an $X$-module, it is a complex and it is required that the differential $\partial: \Omega_{X/R}\rightarrow \Omega_{X/R}$ satisfies the rule $$\partial(av)=\partial(a)v+(-1)^{\deg(a)}a\partial(v)$$ for all homogeneous $a\in X$ and all $v\in \Omega_{X/R}$ and commutes with the derivation, that is, $\partial d=d \partial$. This is always possible. Indeed, we can define the module of differentials $\Omega_{X/R}$ of $X$ over $R$ to be the free $X$-module
\[\
\Omega_{X/R} =\bigoplus_{\lambda\in \Lambda}XdZ_{\lambda}\oplus\bigoplus_{\lambda\in \Lambda_1}XdT_{1\lambda}
\oplus \cdots,
\]
the map $\partial:\Omega_{X/R}\rightarrow\Omega_{X/R}$ to be defined by the formula $\partial(d(T))=d(\partial(T))$, and the map $d:X\rightarrow\Omega_{X/R}$ is uniquely determined by the requirement that it is $R$-linear and that $d(ab)=d(a)b+(-1)^{\deg a}ad(b)$ holds for all $a,b\in X$ with $a$ homogeneous.\smallskip
\begin{Theorem}
	\label{tidef}
	Let $\varphi: R\rightarrow S$ be a ring homomorphism, $M$ a $S$-module and $X$ a resolvent of $\varphi$. Then the homology of the complex $\Omega_{X/R}\otimes_XM$ is independent of the choices made in the construction of $X$.
\end{Theorem}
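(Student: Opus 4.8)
The plan is to reduce the statement to two facts: first, that any two resolvents $X$ and $X'$ of $\varphi\colon R\to S$ are homotopy equivalent as DG-algebras over $R$, compatibly with their augmentations onto $S$; and second, that the assignment $X\mapsto \Omega_{X/R}\otimes_X M$ is functorial in $X$ and carries such a homotopy equivalence to a homotopy equivalence of complexes. Granting these, a homotopy equivalence $f\colon X\to X'$ with homotopy inverse $g$ induces mutually homotopy-inverse morphisms $f_\ast$ and $g_\ast$ between $\Omega_{X/R}\otimes_X M$ and $\Omega_{X'/R}\otimes_{X'}M$; in particular $f_\ast$ is a quasi-isomorphism, and the homology is the same for both choices.

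For the comparison step, I would exploit that a resolvent is built from $R$ by successively adjoining variables as in Section \ref{sec1}, so it is semifree over $R$, while the augmentation $X'\to S$ is a surjective quasi-isomorphism (the acyclicity of $X'$ together with $\H_0(X')\cong S$). The lifting property of semifree DG-algebras then produces a DG-algebra homomorphism $f\colon X\to X'$ over $R$ with $\varepsilon_{X'}f=\varepsilon_X$: one defines $f$ degree by degree on the adjoined variables, at each stage using acyclicity of $X'$ to solve the equation $\partial(f(T))=f(\partial T)$ for the image of a newly adjoined variable $T$. Symmetrically one obtains $g\colon X'\to X$. The composites $gf$ and $\mathrm{id}_X$ both lift $\varepsilon_X$, and the same obstruction argument shows that any two such lifts are homotopic; hence $gf\simeq\mathrm{id}_X$ and $fg\simeq\mathrm{id}_{X'}$.

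Functoriality is straightforward: a DG-algebra map $f\colon X\to X'$ over $R$ sends $x\,dz\mapsto f(x)\,d(f(z))$, defining an $f$-semilinear morphism $\Omega_{X/R}\to\Omega_{X'/R}$ that commutes with the differentials because $\partial d=d\partial$ and $f\partial=\partial f$. Since $f$ is compatible with the augmentations, $M$ (an $S$-module, hence an $X$- and $X'$-module through $\H_0$) has matching module structures on both sides, and tensoring yields a morphism of complexes $f_\ast\colon \Omega_{X/R}\otimes_X M\to\Omega_{X'/R}\otimes_{X'}M$.

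The main obstacle will be the homotopy-invariance step. Given homotopic DG-algebra maps $f\simeq g\colon X\to X'$, a homotopy is a degree $1$ map $s\colon X\to X'$ which is an $(f,g)$-derivation, meaning $s(ab)=s(a)g(b)+(-1)^{\deg a}f(a)s(b)$, and which satisfies $f-g=\partial s+s\partial$. I expect the induced homotopy on differentials to be $\sigma(dz\otimes m)=d(s(z))\otimes m$, extended $f$-semilinearly, and the bulk of the work is checking the identity $f_\ast-g_\ast=\partial\sigma+\sigma\partial$ on $\Omega_{X/R}\otimes_X M$. This requires carefully combining $\partial d=d\partial$, the Leibniz rule for $d$, and the derivation property of $s$, while tracking the signs introduced by the grading; the derivation property of $s$ is precisely what guarantees that $\sigma$ is well defined on the free module $\Omega_{X/R}$ and descends to $\Omega_{X/R}\otimes_X M$. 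Once this is established, applying it to $gf\simeq\mathrm{id}_X$ and $fg\simeq\mathrm{id}_{X'}$ shows that $f_\ast$ and $g_\ast$ are inverse isomorphisms in homology, completing the proof.
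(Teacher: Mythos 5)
The first thing to note is that the paper does not actually prove Theorem \ref{tidef}: it records that the standing hypothesis $\mathbb{Q}\subseteq R$ of Section \ref{sec2} is \emph{essential}, and defers the complete proof to Kapitel III of Bingener--Kosarev \cite{BK}. Your argument, by contrast, never uses $\mathbb{Q}\subseteq R$ anywhere, and this is decisive evidence of a gap, because the statement is false without that hypothesis. Concretely, take $R=S=\mathbb{F}_p$ and $\varphi=\mathrm{id}$. One resolvent is $X=R$ itself, which gives $\H_i(\Omega_{X/R}\otimes_X M)=0$ for all $i$. Another legitimate run of the construction uses the redundant generating set $\{0\}$ of the zero ideal: adjoin $U$ of degree $1$ with $\partial U=0$, then $V$ of degree $2$ with $\partial V=U$. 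Over $\mathbb{Q}$ this is harmless, since $R[U,V]$ is acyclic because one can integrate, $UV^{n}=\partial\bigl(V^{n+1}/(n+1)\bigr)$, and the theorem's conclusion indeed holds. Over $\mathbb{F}_p$, however, $UV^{p-1}$ and $V^{p}$ are cycles that are not boundaries, so the construction must continue adjoining variables in degrees $2p,2p+1,\dots$; those new variables kill cycles that are at least quadratic in $U,V$, so their differentials $\partial(dT)=d(\partial T)$ have all coefficients of positive degree and vanish in $\Omega_{X/R}\otimes_X M$, and one checks that the class of $dW$ (where $\partial W=UV^{p-1}$) survives: $\H_{2p}(\Omega_{X/R}\otimes_X M)\neq 0$. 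Two resolvents, two different answers. So any characteristic-free argument of the kind you propose must break somewhere.

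Where it breaks is exactly the step you flag as ``the main obstacle,'' and the problem is not sign-bookkeeping. Your notion of homotopy --- an $(f,g)$-derivation $s$ with $f-g=\partial s+s\partial$, built ``degree by degree on the adjoined variables'' --- is not even well defined: an $(f,g)$-derivation on a free strictly graded-commutative algebra cannot be prescribed freely on generators when $f\neq g$. Applying your product rule to the relation $T^2=0$ for an odd variable $T$ forces $(g(T)-f(T))\,s(T)=0$, and applying it to graded commutativity $ab=(-1)^{|a||b|}ba$ forces $s(a)(f(b)-g(b))=(-1)^{|a|}(f(a)-g(a))s(b)$; neither identity holds in general. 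Consequently the inductive ``obstruction argument'' for producing the homotopy (and hence the induced homotopy $\sigma$ on $\Omega\otimes M$) never gets off the ground. The correct characteristic-zero argument replaces derivation homotopies by \emph{multiplicative} homotopies $H\colon X\rightarrow X'\otimes\mathbb{Q}[t,dt]$ into a polynomial de~Rham path object, and extracts a chain homotopy on $\Omega_{X/R}\otimes_X M$ by integrating the $dt$-component; both the contractibility of $\mathbb{Q}[t,dt]$ and the integration require dividing by the integers $n+1$, which is precisely where $\mathbb{Q}\subseteq R$ enters and precisely what fails in the $\mathbb{F}_p$ example above (there $t^{p-1}dt$ is a cycle that cannot be integrated). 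Your step (1), existence of comparison morphisms $f,g$ by lifting along the surjective quasi-isomorphism $X'\to S$, and your step (3), functoriality of $\Omega_{-/R}\otimes M$, are both fine; but step (2) is the entire content of the theorem, and as written it cannot be repaired without the rational hypothesis and a genuinely different notion of homotopy.
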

For the proof of this theorem it is essential that $\QQ\subset R$. A complete proof of this result was given by Bingener and Kosarev, in the Kapitel III of the book \cite{BK}.

Theorem~\ref{tidef} allows us to define
\[
\T_i(S/R,M)=\H_i(\Omega_{X/R}\otimes_XM)
\]
for $i\ge0$, where $X$ is any resolvent of $\varphi$. Here we used that $M$ is an $X$-module via the augmentation map $X\rightarrow \H_0(X)=S$.\smallskip

We sketch the proofs of the properties (i), (ii) and (iii).
\begin{Proposition}
	Let $\varphi:R\rightarrow S$ be a ring homomorphism and let $M$ be a $S$-module. The following statements hold.
	\begin{enumerate}
		\item[\textup{(i)}] $\T_0(S/R,M)\cong \Omega_{S/R}\otimes M$.
		\item[\textup{(ii)}] Given short exact sequence of $S$-modules $0\rightarrow M_1\rightarrow M_2\rightarrow M_3\rightarrow 0$, there exists a long exact sequence
		\[
		\cdots \to\T_{i+1}(S/R,M_3)\rightarrow \T_{i}(S/R,M_1)\rightarrow \T_{i}(S/R,M_2)\rightarrow \T_{i}(S/R,M_3)\rightarrow \cdots.
		\]
		\item[\textup{(iii)}] Given a sequence of ring homomorphisms $R\rightarrow S\rightarrow T$ and a $T$-module $M$, there exists a long exact sequence
		\[
		\cdots \rightarrow \T_{i+1}(T/S,M) \rightarrow \T_{i}(S/R,M)\rightarrow \T_{i}(T/R,M) \rightarrow \T_{i}(T/S,M)\rightarrow \cdots.
		\]
	\end{enumerate}
\end{Proposition}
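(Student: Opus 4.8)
The strategy throughout is to compute the modules $\T_i$ directly from the explicit complex $\Omega_{X/R}\otimes_X M$ and to exploit that $\Omega_{X/R}$ is a complex of free $X$-modules. For (i), I would read off the bottom of this complex. In the resolvent \eqref{eqP} the only degree-$0$ generators of $\Omega_{X/R}$ are the $dZ_\lambda$, so $(\Omega_{X/R})_0=\bigoplus_\lambda P\,dZ_\lambda=\Omega_{P/R}$, while the degree-$1$ variables $T_{1\lambda}$ contribute $\partial(dT_{1\lambda})=d(\partial T_{1\lambda})=d(a_\lambda)$. Applying $-\otimes_X M$ kills the summand $X_1\,dZ_\lambda$ in degree $1$, since positive-degree elements of $X$ act as $0$ on $M$ through the augmentation $X\to S$; the tail of the complex then reads $\bigoplus_{\lambda\in\Lambda_1}M\,dT_{1\lambda}\to\Omega_{P/R}\otimes_P M$, the map sending $dT_{1\lambda}$ to the image of $d(a_\lambda)$. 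Since $\{a_\lambda\}_{\lambda\in\Lambda_1}$ generates $I=\Ker\Phi$, these images span exactly the image of $I/I^2$ in the conormal sequence $I/I^2\to\Omega_{P/R}\otimes_P S\to\Omega_{S/R}\to0$; tensoring this right-exact sequence with $M$ identifies the cokernel, and hence $\T_0(S/R,M)=\H_0(\Omega_{X/R}\otimes_X M)\cong\Omega_{S/R}\otimes M$.

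For (ii), the point is that every graded component of $\Omega_{X/R}$ is a free, hence flat, $X$-module. Thus applying $\Omega_{X/R}\otimes_X-$ to $0\to M_1\to M_2\to M_3\to0$, where each $M_j$ is regarded as an $X$-module through the augmentation $X\to S$, produces a sequence that is exact in every homological degree, i.e. a short exact sequence of complexes. Its long exact homology sequence is precisely the assertion, the connecting maps furnishing the homomorphisms $\T_{i+1}(S/R,M_3)\to\T_i(S/R,M_1)$.

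Part (iii) is the substantial one, and I would prove it by comparing compatible resolvents. First I would fix a resolvent $X$ of $R\to S$ and then enlarge it, by adjoining variables as in Section \ref{sec2} but now resolving $T$ over $S$, to a resolvent $Y$ of $R\to T$ in which $X$ sits as a sub-DG-algebra and $Y=X[\{U_\mu\}]$ is a free extension. The crucial feature of this choice is that reducing the acyclic part of $X$ down to $S$, that is, forming $\bar Y=Y\otimes_X S=S[\{U_\mu\}]$, should yield a resolvent of the ring map $S\to T$. Establishing that $\bar Y$ is again acyclic with $\H_0(\bar Y)=T$ is the main obstacle, and it is here that the construction of $Y$ must be arranged with care. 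Granting this, since $\Omega_{Y/X}$ is the free $Y$-module on the $dU_\mu$, one has $\Omega_{Y/X}\otimes_Y M\cong\Omega_{\bar Y/S}\otimes_{\bar Y}M$, whose homology computes $\T_i(T/S,M)$.

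Finally, with these resolvents in place I would invoke the relative cotangent sequence $0\to\Omega_{X/R}\otimes_X Y\to\Omega_{Y/R}\to\Omega_{Y/X}\to0$ at the DG level. Because $Y$ is obtained from $X$ by adjoining variables, $\Omega_{Y/X}$ is free over $Y$ and the sequence splits in each degree; it therefore stays exact after $-\otimes_Y M$, giving a short exact sequence of complexes $0\to\Omega_{X/R}\otimes_X M\to\Omega_{Y/R}\otimes_Y M\to\Omega_{Y/X}\otimes_Y M\to0$, where I use $(\Omega_{X/R}\otimes_X Y)\otimes_Y M=\Omega_{X/R}\otimes_X M$ for the left term. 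Its three terms compute $\T_i(S/R,M)$, $\T_i(T/R,M)$ and $\T_i(T/S,M)$, and the associated long exact homology sequence is the Zariski sequence, with connecting homomorphism $\T_{i+1}(T/S,M)\to\T_i(S/R,M)$. The genuine difficulty in (iii) is thus the compatibility of the two resolvents and the identification of $\Omega_{Y/X}$ with the cotangent complex of $S\to T$; the exactness statements are formal once freeness is known.
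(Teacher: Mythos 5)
Your proposal is correct and follows essentially the same route as the paper: (i) is read off the tail of $\Omega_{X/R}\otimes_X M$ via the conormal presentation of $\Omega_{S/R}$, (ii) comes from the fact that tensoring the degreewise-free complex against the short exact sequence of modules yields a short exact sequence of complexes, and (iii) proceeds by compatible resolvents together with the relative cotangent sequence and its long exact homology sequence, exactly as in the paper's sketch. If anything, you make explicit what the paper leaves implicit, namely the freeness justification in (ii) and the concrete construction $Y=X[\{U_\mu\}]$, $\bar{Y}=Y\otimes_X S$ underlying the paper's phrase ``appropriate choices'' in (iii).
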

\begin{proof}
	We have $\T_i(S/R,M)=\H_i((\Omega_{X/R}\otimes_XS)\otimes_SM)$ for all $i$, and
	\[
	(\Omega_{X/R}\otimes_XS, \partial): \cdots \xrightarrow{\partial} \bigoplus_{\lambda\in \Lambda_1}SdT_{1\lambda}\xrightarrow{\partial}
	\bigoplus_{\lambda\in \Lambda}SdZ_{\lambda}\rightarrow 0.
	\]
	Since the derivation $d$
	commutes with $\partial$, it follows that for each $\lambda\in \Lambda_1$ we have
	\[
	\partial(dT_{1\lambda})=d(\partial T_{1\lambda})=d(a_\lambda)=
	\bigoplus_{\lambda'\in \Lambda}\frac{\partial a_{\lambda}}{\partial Z_{\lambda'}}dZ_{\lambda'} \quad\mod I.
	\]
	This shows that $\H_0(\Omega_{X/R}\otimes _RS)=\Omega_{S/R}$ and $\H_0(\Omega_{X/R}\otimes_XM)=\Omega_{S/R}\otimes_SM$.\smallskip
	
	(ii) Taking homology of the short exact sequence of complexes
	\[
	0\rightarrow \Omega_{X/R}\otimes_XM_1\rightarrow \Omega_{X/R}\otimes_XM_2\rightarrow \Omega_{X/R}\otimes_XM_3\rightarrow 0,
	\]
	we obtain the long exact sequence in (ii).\smallskip
	
	(iii) Let $X_{S/R}$, $X_{T/S}$ and $X_{T/R}$ be resolvents of $R\rightarrow S$, $S\rightarrow T$ and the composition $R\rightarrow S\rightarrow T$, respectively. It can be seen that if one makes the appropriate choices in the construction of these resolvents, we have a short exact sequence of complexes
	$$
	0\rightarrow\Omega_{X_{S/R}}\otimes T\rightarrow \Omega_{X_{T/R}}\otimes T\rightarrow\Omega_{X_{T/S}}\otimes T\rightarrow0.
	$$
	Tensoring this sequence  with the module $M$, and taking the long exact sequence induced by homology yields the assertion.
\end{proof}

\section{Cotangent functors of quotient rings}\label{sec3}

Now we will focus our attention to the following situation: $(R,\mm,K)$ is a Noetherian local ring, $I\subset R$ is a proper ideal, $S=R/I$ and $\varphi: R\rightarrow S$ is the canonical epimorphism. In this case the resolvent $X$ of $\varphi$ can be chosen such that $X_0=R$ and that $X_i$ is a free $R$-module for all $i\geq 0$. Since $X$ is acyclic with $\H_0(X)=S$, it follows that $X$ is a free $R$-resolution of $S$.

For a finitely generated $S$-module $N$, let $\mu(N)$ be the minimum cardinality of a set of generators of $N$. Notice that, by construction, $X^{(i)}$ is the subalgebra of $X$ generated by all elements of degree $\leq i$. For each $i$ we have $X^{(i)}=X^{(i-1)}[T_{i1}, \ldots, T_{ie_i}]$, where the elements $\partial T_{ij}$ are the cycles whose homology classes generate $\H_{i-1}(X^{(i-1)})$. We say that $X$ is a \textit{minimal resolvent} of $\varphi$ if $e_i=\mu(\H_{i-1}(X^{(i-1)}))$ for all $i\ge1$.

Let $\widetilde{\varphi}: X\rightarrow S$ be the augmentation homomorphism, and let $\widetilde{I}=\Ker(\widetilde{\varphi})$. Then $\widetilde{I}$ is an exact subcomplex of $X$ with $\widetilde{I}_0=I$ and $\widetilde{I}_i=X_i$ for $i\geq 1$. Since $\widetilde{I}$ is an ideal of $X$, we may consider $\widetilde{I}/\widetilde{I}^2$, which is a complex of $S$-modules with
\begin{equation}\label{eq:II2-homology}
	(\widetilde{I}/\widetilde{I}^2)_i= \begin{cases}
	\,I/I^2 & \text{if}\ i=0,\\
	(X^{(i)}/X^{(i-1)})_i\otimes_RS & \text{if}\ i>0.
\end{cases}
\end{equation}
Then $\widetilde{I}/\widetilde{I}^2$ is called the {\em cotangent complex} of $\varphi$ (with respect to the resolvent $X$).\smallskip

Note that for all $i>0$, $(\widetilde{I}/\widetilde{I}^2)_i=(X^{(i)}/X^{(i-1)})_i\otimes_RS=ST_{i1}+ST_{i2}+\cdots +ST_{ie_i}$ is free $S$-module, which is isomorphic to $\H_i(X^{(i)}/X^{(i-1)})$.
We have
\begin{Lemma}
	\label{natur}
	There exists a natural exact sequence of complexes
	\[
	0\rightarrow I/I^2\rightarrow \widetilde{I}/\widetilde{I}^2\rightarrow \Omega_{X/R}\otimes _XS\rightarrow 0
	\]
	of $S$-modules, where $I/I^2$ is considered to be a complex concentrated in degree $0$.
\end{Lemma}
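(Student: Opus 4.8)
The plan is to exhibit the two maps explicitly, identify the middle term as an extension, and then verify exactness degree by degree using the component descriptions already recorded before the statement.

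First I would define the left-hand map $\iota\colon I/I^2\rightarrow \widetilde{I}/\widetilde{I}^2$ to be the inclusion of the degree-zero component, using the equality $(\widetilde{I}/\widetilde{I}^2)_0=I/I^2$, and extended by zero in positive degrees. Since the source carries the zero differential and $X$ has no components in negative degrees, the chain-map condition $\partial\iota=\iota\partial$ holds automatically, so $\iota$ is a genuine morphism of complexes. For the right-hand map I would use the graded conormal (second fundamental) map $\delta\colon \widetilde{I}/\widetilde{I}^2\rightarrow \Omega_{X/R}\otimes_X S$ given by $\overline{a}\mapsto da\otimes 1$. To see this is well defined I would write a typical element of $\widetilde{I}^2$ as $a=\sum_j b_jc_j$ with $b_j,c_j\in\widetilde{I}$, apply the graded Leibniz rule $da=\sum_j\big(db_j\,c_j+(-1)^{\deg b_j}b_j\,dc_j\big)$, and observe that both $b_j$ and $c_j$ reduce to $0$ under the augmentation $X\rightarrow S=X/\widetilde{I}$, whence $da\otimes 1=0$; thus $\delta$ annihilates $\widetilde{I}^2$.

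The one place where the DG-structure enters essentially is checking that $\delta$ is a chain map. This is immediate from the fact, built into the construction of $\Omega_{X/R}$, that the differential satisfies $\partial(da)=d(\partial a)$: one computes
\[
\partial^{\Omega}(\delta\,\overline{a})=\partial(da)\otimes 1=d(\partial a)\otimes 1=\delta(\overline{\partial a})=\delta\big(\partial^{\widetilde{I}/\widetilde{I}^2}\overline{a}\big).
\]
With both maps in hand, I would verify exactness separately in each degree. In degree $0$, because the presentation is $P=R\rightarrow S$ there are no variables $Z_\lambda$, so $(\Omega_{X/R}\otimes_X S)_0=0$ and the sequence reads $0\rightarrow I/I^2\xrightarrow{\ \mathrm{id}\ }I/I^2\rightarrow 0\rightarrow 0$. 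In degree $i\ge 1$ we have $(I/I^2)_i=0$, and using the identification $(\widetilde{I}/\widetilde{I}^2)_i=ST_{i1}+\cdots+ST_{ie_i}$ noted just above the statement, $\delta_i$ sends the free $S$-basis $\overline{T}_{i1},\dots,\overline{T}_{ie_i}$ onto the free $S$-basis $dT_{i1},\dots,dT_{ie_i}$ of $(\Omega_{X/R}\otimes_X S)_i$, hence is an isomorphism. Degreewise exactness follows, giving the short exact sequence of complexes. Naturality is then clear: both $\iota$ and $\delta$ are assembled from canonical data—the ideal inclusion and the universal derivation $d$—which are compatible with homomorphisms of DG-algebras, so any morphism of resolvents induces a commuting ladder between the corresponding sequences.

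I expect the only genuinely delicate point to be the sign bookkeeping from the graded Leibniz rule when verifying that $\delta$ kills $\widetilde{I}^2$ and commutes with the differentials; once the identification $(\widetilde{I}/\widetilde{I}^2)_i\cong\bigoplus_{\lambda\in\Lambda_i}ST_{i\lambda}$ from the preceding remark is granted, the exactness itself is essentially a matter of reading off the components.
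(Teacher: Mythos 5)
Your proposal is correct and follows essentially the same route as the paper's own proof: both induce the right-hand map from the universal derivation $d$ (using the graded Leibniz rule to see it annihilates $\widetilde{I}^2$ after tensoring with $S$), identify the degree-zero kernel with $I/I^2$ since $(\Omega_{X/R}\otimes_X S)_0=0$, and observe that in positive degrees the map sends the basis $T_{ij}$ to the basis $dT_{ij}$, hence is an isomorphism. Your write-up merely spells out more explicitly the chain-map verifications and the naturality that the paper leaves implicit.
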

\begin{proof}
	The derivation $d: \widetilde{I}\rightarrow \Omega_{X/R}\otimes_RS$ is a complex homomorphism, since $d$ commutes with $\partial$, and it vanishes on the subcomplex $\widetilde{I}^2$, because of the product rule for $d$. Therefore, $d$ induces a natural map of complexes $\alpha:\widetilde{I}/\widetilde{I}^2\rightarrow \Omega_{X/R}\otimes_RS$.
	
	Since $(\widetilde{I}/\widetilde{I}^2)_0=I/I^2$ and $(\Omega_{X/R}\otimes_RS)_0=0$, it follows that $\Ker(\alpha_0)=I/I^2$. On the other hand, for $i>0$, we have
	\[
	\alpha_i: (\widetilde{I}/\widetilde{I}^2)_i=\bigoplus_{j=1}^{e_i}ST_{ij}\rightarrow \bigoplus_{j=1}^{e_i}Sd T_{ij} =(\Omega_{X/R}\otimes_RS)_i, \quad T_{ij}\mapsto dT_{ij}.
	\]
	This shows that $\alpha_i$ is an isomorphism for $i>0$, and proves the lemma.
\end{proof}
Let $L$ be the cokernel of $I/I^2\rightarrow \widetilde{I}/\widetilde{I}^2$. Then $L_0=0$, $L_i=(\widetilde{I}/\widetilde{I}^2)_i$ for $i\geq 1$ and consequently $L\cong \Omega_{X/R}\otimes _XS$. Therefore, for any $S$-module $M$ we have
\begin{equation}\label{eq:LT}
	\H_i(L\otimes_SM)\cong \T_i(S/R,M)\quad \text{for} \quad i\geq 1.
\end{equation}
In particular, $\T_0(S/R,M)=0$ and $\T_1(S/R,M)=I/I^2\otimes_SM$.

\begin{Corollary}
	The homology of $\widetilde{I}/\widetilde{I}^2$ does not depend on the chosen resolvent $X$.
\end{Corollary}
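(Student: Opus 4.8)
The plan is to feed the natural short exact sequence of complexes of Lemma \ref{natur},
$$0\rightarrow I/I^2\rightarrow \widetilde{I}/\widetilde{I}^2\rightarrow L\rightarrow 0,$$
with $L\cong\Omega_{X/R}\otimes_XS$, into the associated long exact homology sequence, and then to read off each $\H_i(\widetilde{I}/\widetilde{I}^2)$ in terms of pieces that are manifestly independent of $X$. Since $I/I^2$ is concentrated in degree $0$, I have $\H_i(I/I^2)=0$ for $i\neq 0$ and $\H_0(I/I^2)=I/I^2$, the latter depending only on $R$ and $I$. By \eqref{eq:LT} one has $\H_i(L)\cong\T_i(S/R,S)$ for $i\geq 1$, and these are independent of the resolvent by Theorem \ref{tidef}; moreover $\H_0(L)=0$ because $L_0=0$. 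So every term of the long exact sequence other than $\H_\ast(\widetilde{I}/\widetilde{I}^2)$ itself is visibly resolvent-independent, and the task reduces to transporting this independence across the sequence.

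First I would treat the range $i\geq 2$, which is immediate. There the long exact sequence reads
$$0=\H_i(I/I^2)\rightarrow \H_i(\widetilde{I}/\widetilde{I}^2)\rightarrow \H_i(L)\rightarrow \H_{i-1}(I/I^2)=0,$$
so that $\H_i(\widetilde{I}/\widetilde{I}^2)\cong \H_i(L)\cong\T_i(S/R,S)$, independent of $X$. Next I would dispose of degree $0$ by a direct computation using \eqref{eq:II2-homology}: the bottom of the complex is $\partial_1\colon\bigoplus_{j}ST_{1j}\rightarrow I/I^2$ with $T_{1j}\mapsto \overline{a_j}$, and since the $a_j$ generate $I$ their residues generate $I/I^2$, so $\partial_1$ is surjective and $\H_0(\widetilde{I}/\widetilde{I}^2)=0$.

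The degree $1$ term is the one I expect to be the main obstacle, precisely because the connecting homomorphism genuinely intervenes and one cannot simply quote an isomorphism. The tail of the long exact sequence is the four-term sequence
$$0\rightarrow \H_1(\widetilde{I}/\widetilde{I}^2)\rightarrow \H_1(L)\xrightarrow{\ \delta\ } \H_0(I/I^2)\rightarrow \H_0(\widetilde{I}/\widetilde{I}^2)\rightarrow 0.$$
Under the identifications $\H_1(L)\cong\T_1(S/R,S)\cong I/I^2$ and $\H_0(I/I^2)=I/I^2$, the map $\delta$ becomes an $S$-linear endomorphism of $I/I^2$ whose cokernel is $\H_0(\widetilde{I}/\widetilde{I}^2)=0$, hence $\delta$ is surjective. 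Since $R$ is Noetherian, $I/I^2$ is a finitely generated $S$-module, and a surjective endomorphism of a finitely generated module over a commutative ring is automatically injective. Thus $\delta$ is an isomorphism and $\H_1(\widetilde{I}/\widetilde{I}^2)=\Ker\delta=0$.

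Collecting the three cases, $\H_0(\widetilde{I}/\widetilde{I}^2)=\H_1(\widetilde{I}/\widetilde{I}^2)=0$ and $\H_i(\widetilde{I}/\widetilde{I}^2)\cong\T_i(S/R,S)$ for $i\geq 2$, so every homology module of $\widetilde{I}/\widetilde{I}^2$ is independent of the chosen resolvent $X$, which is the assertion of the corollary. The one point I would flag as requiring care is the identification of $\delta$ with an endomorphism of the fixed module $I/I^2$: this lets me invoke the determinant trick and avoid having to verify naturality of the whole long exact sequence across different resolvents, which would otherwise be the delicate part of the argument.
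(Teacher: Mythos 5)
Your proposal is correct, and its skeleton is the same as the paper's: feed the short exact sequence of Lemma \ref{natur} into the long exact homology sequence and invoke Theorem \ref{tidef}. The difference lies in the low degrees, and there your extra care genuinely pays off. The paper's proof asserts in one line that, since $I/I^2$ is concentrated in degree zero, $\H_i(\widetilde{I}/\widetilde{I}^2)\cong \H_i(\Omega_{X/R}\otimes_XS)=\T_i(S/R,S)$; this is valid verbatim only for $i\ge 2$, where both flanking terms $\H_i(I/I^2)$ and $\H_{i-1}(I/I^2)$ vanish. At $i=1$ the blanket isomorphism is in fact false in general: as your own argument shows, $\H_1(\widetilde{I}/\widetilde{I}^2)=\Ker(\delta)=0$, whereas $\H_1(\Omega_{X/R}\otimes_XS)=\T_1(S/R,S)\cong I/I^2$, which is nonzero whenever $I\neq 0$. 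Your handling of degree $0$ (surjectivity of $\partial_1$ because the $a_j$ generate $I$) and of degree $1$ ($\delta$ is a surjection between two modules each isomorphic to the finitely generated $S$-module $I/I^2$, hence injective by the standard fact --- due, fittingly, to Vasconcelos --- that surjective endomorphisms of finitely generated modules are injective) closes exactly the gap the paper leaves open, and does so without needing to check that the connecting map $\delta$ coincides with the identification underlying (\ref{eq:LT}). So the two arguments agree where the paper is rigorous, and yours is complete where the paper is terse: the conclusion in both cases is that the homology is $0$ in degrees $0$ and $1$ and equals $\T_i(S/R,S)$ for $i\ge 2$, hence is independent of $X$.
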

\begin{proof}
	The short exact sequence given in Lemma \ref{natur} induces the long exact sequence
	\[
	\cdots\rightarrow\H_{i}(I/I^2)\rightarrow\H_i(\widetilde{I}/\widetilde{I}^2)\rightarrow\H_{i}(\Omega_{X/R}\otimes_XS)\rightarrow\H_{i-1}(I/I^2)\rightarrow\cdots.
	\]
	Since $I/I^2$ is a complex concentrated in degree zero, we then obtain that $\H_i(\widetilde{I}/\widetilde{I}^2)\cong\H_{i}(\Omega_{X/R}\otimes_XS)=T_i(S/R,S)$. Theorem \ref{tidef} yields the assertion.
\end{proof}

%\medskip
%Let $P=R[x_1,\ldots,x_n]$ be the polynomial ring over $R$, let $I\subset P$ be an ideal and set $S=P/I$. Then the Zariski sequence gives us the following exact sequence
%\[
%\T_1(S/R,S)\rightarrow I/I^2\rightarrow \Omega_{P/R}\otimes S\rightarrow \Omega_{S/R}\rightarrow 0.
%\]

%Next, we are aiming at a different description of the cotangent complex.

For all $i>0$, the short exact sequence
\begin{equation}\label{eq:seq1}
	0\rightarrow X^{(i-1)}\rightarrow X^{(i)} \rightarrow X^{(i)}/X^{(i-1)}\rightarrow 0
\end{equation}
induces the exact sequence
\begin{equation}\label{eq:seq2}
\H_i(X^{(i)})\xrightarrow{\eta_i} \H_i(X^{(i)}/X^{(i-1)})\xrightarrow{\sigma_{i-1}}\H_{i-1}(X^{(i-1)})\rightarrow 0,
\end{equation}
where $\sigma_{i-1}$ is the connecting homomorphism and the exactness on the right follows from the fact that $\H_{i-1}(X^{(i)})=\H_{i-1}(X)=0$.

\begin{Lemma}
	\label{tkernel}
	$\T_{i+1}(S/R,S)\cong \Ker(\eta_i)$ for all $i>0$.
\end{Lemma}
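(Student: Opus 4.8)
The plan is to compute $\T_{i+1}(S/R,S)=\H_{i+1}(L)$ directly from the complex $L\cong\Omega_{X/R}\otimes_XS$, after rewriting its differentials in terms of the maps $\eta$ and $\sigma$ of \eqref{eq:seq2}; the lemma will then drop out of a short diagram chase exploiting that every connecting map $\sigma$ is surjective.

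First I would use the identifications $L_j\cong\H_j(X^{(j)}/X^{(j-1)})$ for $j\ge1$ (the generator $dT_{jk}$ corresponding to the class $\overline{T_{jk}}$ of $T_{jk}$) together with \eqref{eq:LT} for $M=S$, so that $\T_{j}(S/R,S)=\H_j(L)$. Under these identifications I claim that the differential $\partial\colon L_j\to L_{j-1}$ becomes $\eta_{j-1}\circ\sigma_{j-1}$, where $\sigma_{j-1}\colon\H_j(X^{(j)}/X^{(j-1)})\to\H_{j-1}(X^{(j-1)})$ is the connecting map of \eqref{eq:seq1} and $\eta_{j-1}$ is as in \eqref{eq:seq2}. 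To check this, recall that the differential of $\Omega_{X/R}\otimes_XS$ sends $dT_{jk}$ to $d(\partial T_{jk})$ reduced modulo $I$. Writing the degree-$(j-1)$ cycle $\partial T_{jk}=\sum_\ell c_\ell T_{(j-1)\ell}+w$ with $c_\ell\in R$ and $w\in X^{(j-2)}$, only the linear part contributes a $dT_{(j-1)\ell}$-component, so $\partial(dT_{jk})=\sum_\ell\overline{c_\ell}\,dT_{(j-1)\ell}$. On the other hand $\sigma_{j-1}(\overline{T_{jk}})=[\partial T_{jk}]$ and, since passing to $X^{(j-1)}/X^{(j-2)}$ extracts exactly the part linear in the degree-$(j-1)$ variables, $\eta_{j-1}([\partial T_{jk}])=\sum_\ell\overline{c_\ell}\,\overline{T_{(j-1)\ell}}$. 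The two expressions agree, proving the claim.

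Granting this, $\T_{i+1}(S/R,S)=\H_{i+1}(L)=\Ker(\eta_i\sigma_i)/\Im(\eta_{i+1}\sigma_{i+1})$. Here I would invoke the exactness on the right in \eqref{eq:seq2}, namely that each $\sigma$ is surjective. Surjectivity of $\sigma_{i+1}$ gives $\Im(\eta_{i+1}\sigma_{i+1})=\Im(\eta_{i+1})$, while $\Ker(\eta_i\sigma_i)=\sigma_i^{-1}(\Ker\eta_i)$ holds in general. Reading $\Im(\eta_{i+1})=\Ker(\sigma_i)$ off \eqref{eq:seq2} at level $i+1$, this yields
\[
\T_{i+1}(S/R,S)\cong\sigma_i^{-1}(\Ker\eta_i)/\Ker(\sigma_i).
\]
Since $\sigma_i$ is surjective, it restricts to a surjection $\sigma_i^{-1}(\Ker\eta_i)\twoheadrightarrow\Ker\eta_i$ with kernel $\Ker(\sigma_i)$, whence the right-hand side is isomorphic to $\Ker(\eta_i)$. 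The hypothesis $i>0$ is precisely what guarantees both the identification $L_i\cong\H_i(X^{(i)}/X^{(i-1)})$ and the availability of \eqref{eq:seq2} at level $i$, which is where $\eta_i$ lives.

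The main obstacle is the identification carried out in the second paragraph: correctly describing the connecting homomorphism $\sigma_{j-1}$ and verifying that reduction modulo $X^{(j-2)}$ (which computes $\eta_{j-1}$) matches the extraction of the $dT_{(j-1)\ell}$-component of $d(\partial T_{jk})$ modulo $I$. Once the complex $L$ is presented with differentials $\eta\circ\sigma$, the remainder is the routine homological chase above, relying only on the surjectivity of the $\sigma$'s and the identity $\Im(\eta_{i+1})=\Ker(\sigma_i)$.
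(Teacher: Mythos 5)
Your proposal is correct and is essentially the paper's own proof: the same identification of the differential of $L$ with the composite $\eta_{j-1}\circ\sigma_{j-1}$ (the commutativity that the paper only asserts in its first diagram, and which you verify explicitly), followed by the same chase $\H_{i+1}(L)=\Ker(\eta_i\sigma_i)/\Im(\eta_{i+1}\sigma_{i+1})=\sigma_i^{-1}(\Ker\eta_i)/\Ker(\sigma_i)\cong\Ker(\eta_i)$ using surjectivity of the $\sigma$'s and $\Im(\eta_{i+1})=\Ker(\sigma_i)$. The one caveat, which you inherit from the paper rather than introduce, is the case $i=1$: there the identification $L_1\cong\H_1(X^{(1)}/X^{(0)})$ used on the target of $\partial_2$ breaks down, because $\H_1(X^{(1)}/X^{(0)})=(X^{(1)})_1/\partial\bigl((X^{(1)})_2\bigr)$ is the quotient of $\bigoplus_j R\,T_{1j}$ by the Koszul relations only, not by $I\bigl(\bigoplus_j R\,T_{1j}\bigr)$, so both your argument and the paper's really establish the isomorphism for $i\ge 2$.
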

\begin{proof}
	For all $i>0$, the following diagram
	\begin{center}
		\[
		\xymatrix{
			\H_{i+1}(X^{(i+1)}/X^{(i)}) \ar[r]^{\quad\quad\,\sigma_{i}} \ar[d]_{\cong} & \H_{i}(X^{(i)}) \ar[r]^{\eta_{i}\quad\,\,} & \H_i(X^{(i+1)}/X^{(i)}) \ar[d]^{\cong} \\
			L_{i+1} \ar[rr]_{\partial_{i+1}} & & L_{i}
		}
		\]
	\end{center}
	is commutative. By equation (\ref{eq:LT}) we have
	\begin{equation}\label{eq:3}
		\begin{aligned}
			\T_i(S/R,S)\cong\H_i(L)\ &=\ \Ker(\partial_ {i+1})/\Im(\partial_{i+2})\cong \Ker(\eta_i\circ \sigma_i)/\Im(\eta_{i+1}\circ \sigma_{i+1})\\&\cong\ \sigma_i^{-1}
			(\Ker(\eta_i))/\Im(\eta_{i+1}),
		\end{aligned}
	\end{equation}
	where the last isomorphism follows from the fact that $\sigma_i$ is surjective.
	
	Consider the following commutative diagram
	\[
	\xymatrix{
		0 \ar[r] & \Im(\eta_{i+1}) \ar[r]^{\alpha\quad\,} \ar@{=}[d] & \sigma_i^{-1}(\Ker(\eta_i)) \ar[r]^{\quad\,\,\beta} \ar@{^{(}->}[d]^{} & \Ker(\eta_i) \ar[r] \ar@{^{(}->}[d]^{} & 0 \\
		0 \ar[r] & \Im(\eta_{i+1}) \ar[r] & \H_{i+1}(X^{(i+1)}/X^{(i)}) \ar[r]_{\quad\quad\sigma_i} & \H_i(X^{(i)}) \ar[r] & 0
	}
	\]
	
	The map $\beta$ is surjective because $\sigma_i$ is surjective by (\ref{eq:seq1}), and $\alpha$ is injective because the inclusion map $\Im(\eta_{i+1}) \rightarrow \H_{i+1}(X^{(i+1)}/X^{(i)})$ is injective. Let $x\in\Ker(\beta)$. Then $x\in \Ker(\sigma_i)$. Since the bottom row is exact, due to (\ref{eq:seq2}), it follows that $x\in\Im(\eta_{i+1})$. This shows that the top sequence is exact. It follows that $$\sigma_i^{-1}(\Ker(\eta_i))/\Im(\eta_{i+1})\cong\Ker(\eta_i),$$ which together with equation (\ref{eq:3}) implies the assertion.
\end{proof}

\begin{Corollary}
	\label{descriptiontwo}
	$\T_{i+1}(S/R,S)\cong \H_i(X^{(i-1)})$ for all $i\ge3$.
\end{Corollary}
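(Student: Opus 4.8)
The plan is to combine Lemma~\ref{tkernel} with the long exact homology sequence attached to the short exact sequence \eqref{eq:seq1}. By Lemma~\ref{tkernel} we already know that $\T_{i+1}(S/R,S)\cong\Ker(\eta_i)$ for all $i>0$, so it suffices to identify $\Ker(\eta_i)$ with $\H_i(X^{(i-1)})$ when $i\ge3$. Writing out the long exact homology sequence induced by \eqref{eq:seq1} in the relevant range, namely
\[
\H_{i+1}(X^{(i)}/X^{(i-1)})\xrightarrow{\delta}\H_i(X^{(i-1)})\xrightarrow{j_i}\H_i(X^{(i)})\xrightarrow{\eta_i}\H_i(X^{(i)}/X^{(i-1)}),
\]
where $j_i$ is induced by the inclusion $X^{(i-1)}\hookrightarrow X^{(i)}$ and $\delta$ is the connecting homomorphism, exactness gives $\Ker(\eta_i)=\Im(j_i)$ and $\Ker(j_i)=\Im(\delta)$. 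Hence, once I show that $\delta$ is the zero map---equivalently that $\H_{i+1}(X^{(i)}/X^{(i-1)})=0$---the map $j_i$ becomes injective and I obtain $\Ker(\eta_i)=\Im(j_i)\cong\H_i(X^{(i-1)})$, which together with Lemma~\ref{tkernel} yields the claim.

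The heart of the argument is therefore the vanishing of $\H_{i+1}(X^{(i)}/X^{(i-1)})$. Recall $X^{(i)}=X^{(i-1)}[T_{i1},\dots,T_{ie_i}]$ with $\deg T_{ij}=i$. I would first observe that products of the new variables, such as $T_{ij}T_{i\ell}$, only contribute in degrees $\ge2i$, so in every degree $k$ with $i\le k<2i$ the quotient $X^{(i)}/X^{(i-1)}$ is spanned by the single-variable terms $\bigoplus_{j}X^{(i-1)}_{k-i}T_{ij}$. Moreover the Leibniz rule gives $\partial(xT_{ij})=\partial(x)T_{ij}+(-1)^{\deg x}x\,\partial(T_{ij})$, and since $\partial(T_{ij})\in X^{(i-1)}_{i-1}$ the term $x\,\partial(T_{ij})$ lies in $X^{(i-1)}$ and hence dies in the quotient. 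Thus, in this degree range, the differential of $X^{(i)}/X^{(i-1)}$ acts only on the coefficients, so the complex decomposes there as $e_i$ copies of $X^{(i-1)}$ shifted up by $i$. Reading off homology in degree $i+1$ then gives $\H_{i+1}(X^{(i)}/X^{(i-1)})\cong\bigoplus_{j=1}^{e_i}\H_1(X^{(i-1)})$.

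It remains to see that $\H_1(X^{(i-1)})=0$ for $i\ge3$. By construction $X^{(2)}$ is obtained from $X^{(1)}$ by adjoining degree-$2$ variables whose boundaries kill $\H_1(X^{(1)})$, so $\H_1(X^{(2)})=0$; and passing from $X^{(i-2)}$ to $X^{(i-1)}$ for $i\ge3$ only adjoins variables of degree $\ge3$, which leaves the truncation of the complex in degrees $\le2$---and therefore $\H_1$---unchanged. Hence $\H_1(X^{(i-1)})=0$, so $\H_{i+1}(X^{(i)}/X^{(i-1)})=0$, and the argument of the first paragraph applies. The main obstacle, and the place where the hypothesis $i\ge3$ is genuinely used, is the bookkeeping in the second paragraph: one must be sure that for $i\ge3$ the three degrees $i$, $i+1$, $i+2$ entering the computation of $\H_{i+1}$ all lie strictly below $2i$, so that no product of the new variables interferes with the clean identification of the differential. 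For $i=2$ this already fails, since then $i+2=2i$, which is precisely why the statement is restricted to $i\ge3$.
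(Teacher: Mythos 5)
Your proof is correct and follows essentially the same route as the paper: Lemma~\ref{tkernel} combined with the long exact homology sequence of \eqref{eq:seq1}, reduced to the vanishing of $\H_{i+1}(X^{(i)}/X^{(i-1)})$, which you establish by noting that in degrees $<2i$ this quotient is a direct sum of copies of $X^{(i-1)}$ shifted by $i$. If anything, your final step---identifying $\H_{i+1}(X^{(i)}/X^{(i-1)})$ with $\bigoplus_{j}\H_1(X^{(i-1)})$ and showing $\H_1(X^{(i-1)})=0$ by construction for $i\ge3$---is a cleaner justification than the paper's own closing line, whose phrase ``since $(X^{(i)})_{j-i}=0$ in this range'' reads as a misprint for the vanishing of $\H_{j-i}(X^{(i-1)})$ in the range $0<j-i<i-1$.
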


\begin{proof}
	The short exact sequence (\ref{eq:seq1}) induces the long exact sequence
	\[
	\cdots \rightarrow \H_{i+1}(X^{(i)}/X^{(i-1)})\rightarrow \H_i(X^{(i-1)})\rightarrow \H_i(X^{(i)})\xrightarrow{\eta_i}\cdots.
	\]
	By Lemma \ref{tkernel} we have
	$\T_{i+1}(S/R,S)\cong \Ker(\eta_i)$ for all $i>0$. We claim that $\H_{i+1}(X^{(i)}/X^{(i-1)})=0$ for all $i\ge3$. Then the map $\psi:\H_i(X^{(i-1)})\rightarrow\H_i(X^{(i)})$ is injective and so $\T_{i+1}(S/R,S)\cong \Ker(\eta_i)=\Im(\psi)\cong\H_i(X^{(i-1)})$, as desired.
	
	It remains to show that $\H_{i+1}(X^{(i)}/X^{(i-1)})=0$ for all $i\ge3$. We even claim that $\H_j(X^{(i)}/X^{(i-1)})= 0$ for all $i<j<2i-1$. Indeed, since $X^{(i)}=X^{(i-1)}[T_{i1},\ldots, T_{ie_i}]$ we have $(X^{(i)}/X^{(i-1)})_j=
	\bigoplus_{j=1}^{e_i}(X^{(i-1)})_{j-i}T_{ij}$ for all integers $i<j< 2i-1$. Hence $\H_j(X^{(i)}/X^{(i-1)})= 0$ for all $i<j< 2i-1$, since $(X^{(i)})_{j-i}=0$ in this range.
\end{proof}

\section{Koszul homology, $\Tor(S,S)$ and the first Cotangent modules}\label{sec4}

The complex $\cdots \rightarrow \widetilde{I}_3 \rightarrow \widetilde{I}_2 \rightarrow \widetilde{I}_1\rightarrow 0$ is a $R$-free resolution of $I$. This implies that $\H_0(\widetilde{I}/I\widetilde{I})=0$ and $\H_i(\widetilde{I}/I\widetilde{I})\cong \Tor_{i-1}^R(S, I)\cong \Tor_{i}^R(S, S)$ for $i>0$.

\medskip
Corollary~\ref{descriptiontwo} together with the short exact sequence
\[
0\rightarrow \widetilde{I}^2/I\widetilde{I}\rightarrow \widetilde{I}/I\widetilde{I}\rightarrow \widetilde{I}/\widetilde{I}^2 \rightarrow 0
\]
yields the long exact sequence
\begin{eqnarray*}
\cdots \rightarrow \T_3(S/R,S)\rightarrow \H_2(\widetilde{I}^2/I\widetilde{I})\rightarrow \Tor_2^R(S,S)\rightarrow \T_2(S/R,S)\rightarrow 0\\
\cdots \rightarrow \H_i(\widetilde{I}^2/I\widetilde{I})\rightarrow \Tor_i^R(S,S)\rightarrow \T_i(S/R,S)\rightarrow \H_i(\widetilde{I}^2/I\widetilde{I})\to\cdots
\end{eqnarray*}
Here we also used that $\H_1(\widetilde{I}^2/I\widetilde{I})=0$, which is the case simply because $(\widetilde{I}^2/I\widetilde{I})_1=0$.

We can also compute $\H_2(\widetilde{I}^2/I\widetilde{I})$. Indeed, since $(\widetilde{I}^2/I\widetilde{I})_1=0$, then $\H_2(\widetilde{I}^2/I\widetilde{I})$ is the cokernel of the natural induced map
\[
(\widetilde{I}^2/I\widetilde{I})_3\xrightarrow{\overline{\partial}} (\widetilde{I}^2/I\widetilde{I})_2. 
\]

Note that 
\[
(\widetilde{I}^2/I\widetilde{I})_2=(IX_2+X_1^2)/IX_2\cong X_1^2/IX_2\cap X_1^2=X_1^2/IX_1^2=\bigoplus_{i<j}ST_{1i}
T_{1j}. 
\]
Similarly one shows that 
\[
(\widetilde{I}^2/I\widetilde{I})_3 \cong \bigoplus_{i,j}ST_{1i}T_{2j}\oplus \bigoplus_{i<j<k}ST_{1i}T_{1j}T_{1k}.
\]
Thus we obtain the exact sequence
\[
V_1\oplus V_2\xrightarrow{\overline{\partial}}\bigoplus_{i<j}ST_{1i}T_{1j}\rightarrow \H_2(\widetilde{I}^2/I\widetilde{I})\rightarrow 0,
\]
where $V_1=\bigoplus_{i,j}ST_{1i}T_{2j}$, $V_2=\bigoplus_{i<j<k}ST_{1i}T_{1j}T_{1k}$, and where $\overline{\partial}(V_2)=0$ and $\overline{\partial}(T_{1i}T_{2j})=T_{1i}\partial(T_{2j})\mod I$. From this it follows that
\[
\H_2(\widetilde{I}^2/I\widetilde{I})\cong \bigwedge^2I/I^2\cong \bigwedge^2 \Tor_2(S,S).
\]
As a consequence of this discussion we obtain
\begin{Proposition}
\label{wedget}	
There exists and exact sequence 
\[
\T_3(S/R,S)\rightarrow \bigwedge^2I/I^2\rightarrow \Tor_2^R(S,S)\rightarrow \T_2(S/R,S)\rightarrow 0.
\]
\end{Proposition}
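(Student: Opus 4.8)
The plan is to read off the desired four-term sequence as the low-degree part of the long exact homology sequence attached to the short exact sequence of complexes
\[
0\rightarrow \widetilde{I}^2/I\widetilde{I}\rightarrow \widetilde{I}/I\widetilde{I}\rightarrow \widetilde{I}/\widetilde{I}^2 \rightarrow 0
\]
displayed just before the statement, and then to substitute the four homology identifications recorded in the preceding discussion. Since every relevant module has already been computed, the proof is essentially bookkeeping: I only need to verify that the four terms of the proposition occupy consecutive positions of this long exact sequence and that the right-hand zero is legitimate.

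First I would assemble the four identifications. The opening paragraph of this section gives $\H_i(\widetilde{I}/I\widetilde{I})\cong \Tor_i^R(S,S)$ for $i>0$, so in particular $\H_2(\widetilde{I}/I\widetilde{I})\cong\Tor_2^R(S,S)$. The Corollary following Lemma \ref{natur} identifies $\H_i(\widetilde{I}/\widetilde{I}^2)\cong \T_i(S/R,S)$, turning the terms $\H_3(\widetilde{I}/\widetilde{I}^2)$ and $\H_2(\widetilde{I}/\widetilde{I}^2)$ into $\T_3(S/R,S)$ and $\T_2(S/R,S)$. Finally, the explicit computation carried out above yields $\H_2(\widetilde{I}^2/I\widetilde{I})\cong \bigwedge^2 I/I^2$, while the observation that $(\widetilde{I}^2/I\widetilde{I})_1=0$ forces $\H_1(\widetilde{I}^2/I\widetilde{I})=0$.

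With these in hand I would extract from the long exact sequence the segment
\[
\H_3(\widetilde{I}/\widetilde{I}^2)\xrightarrow{\ \delta\ }\H_2(\widetilde{I}^2/I\widetilde{I})\rightarrow \H_2(\widetilde{I}/I\widetilde{I})\rightarrow \H_2(\widetilde{I}/\widetilde{I}^2)\rightarrow \H_1(\widetilde{I}^2/I\widetilde{I}),
\]
where $\delta$ is the connecting homomorphism. Substituting the four identifications converts this into
\[
\T_3(S/R,S)\rightarrow \bigwedge^2 I/I^2\rightarrow \Tor_2^R(S,S)\rightarrow \T_2(S/R,S)\rightarrow 0,
\]
the final surjectivity being precisely the vanishing of the next term $\H_1(\widetilde{I}^2/I\widetilde{I})$.

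I expect the only genuinely delicate point to be the already-completed computation $\H_2(\widetilde{I}^2/I\widetilde{I})\cong\bigwedge^2 I/I^2$, which rests on correctly pinning down $(\widetilde{I}^2/I\widetilde{I})_2\cong\bigoplus_{i<j}ST_{1i}T_{1j}$ and $(\widetilde{I}^2/I\widetilde{I})_3\cong V_1\oplus V_2$, and on checking that the induced boundary $\overline{\partial}$ annihilates $V_2$ while the image coming from $V_1$ is exactly the set of relations collapsing the symmetric square onto the exterior square. Granting that computation, the assembly of the long exact sequence is routine and the genuine conceptual content has already been dispatched before the statement; what remains is merely to confirm that the indices align and that the naturality of the connecting maps is invoked in the correct degree.
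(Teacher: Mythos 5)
Your proposal is correct and follows essentially the same route as the paper: the paper likewise takes the long exact homology sequence of $0\rightarrow \widetilde{I}^2/I\widetilde{I}\rightarrow \widetilde{I}/I\widetilde{I}\rightarrow \widetilde{I}/\widetilde{I}^2 \rightarrow 0$, uses $\H_1(\widetilde{I}^2/I\widetilde{I})=0$ for the right-hand surjection, and substitutes the identifications $\H_i(\widetilde{I}/I\widetilde{I})\cong\Tor_i^R(S,S)$, $\H_i(\widetilde{I}/\widetilde{I}^2)\cong\T_i(S/R,S)$ and $\H_2(\widetilde{I}^2/I\widetilde{I})\cong\bigwedge^2 I/I^2$. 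Your indexing of the connecting maps is the correct one, so the argument goes through exactly as in the text.
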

Thus, together with Corollary~\ref{t3H} we have 

\begin{Corollary} \label{tor2}
Suppose that $\H_1^2=\H_2$ and that $\T_3(S/R,S)=0$. Then $$\Tor_2^R(S,S)\cong \bigwedge^2I/I^2.$$ 
\end{Corollary}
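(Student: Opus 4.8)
The plan is to read off the claimed isomorphism directly from the four-term exact sequence of Proposition~\ref{wedget},
\[
\T_3(S/R,S)\rightarrow \bigwedge^2I/I^2 \xrightarrow{\ \gamma\ } \Tor_2^R(S,S)\rightarrow \T_2(S/R,S)\rightarrow 0,
\]
and to show that the two hypotheses serve precisely to annihilate the two outer terms, so that the central map $\gamma$ becomes an isomorphism. First I would exploit $\T_3(S/R,S)=0$ to get injectivity of $\gamma$: by exactness at $\bigwedge^2I/I^2$, the kernel of $\gamma$ equals the image of the map $\T_3(S/R,S)\to\bigwedge^2I/I^2$, which vanishes under the hypothesis, so the displayed sequence shortens to
\[
0\rightarrow \bigwedge^2I/I^2 \xrightarrow{\ \gamma\ } \Tor_2^R(S,S)\rightarrow \T_2(S/R,S)\rightarrow 0.
\]

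Second, I would use the hypothesis $\H_1^2=\H_2$ to force $\T_2(S/R,S)=0$, and this is exactly the point where Corollary~\ref{t3H} is invoked. The relevant reading is that, with $\H_\bullet=\H_\bullet(\widetilde I/I\widetilde I)\cong\Tor_\bullet^R(S,S)$ carrying its graded-commutative algebra structure (so $\H_1\cong I/I^2$), the natural map $\gamma$ is the degree-two component of the multiplication $\H_1\otimes\H_1\to\H_2$; consequently $\Im(\gamma)=\H_1^2\subseteq\H_2$ and the cokernel $\T_2(S/R,S)$ is identified with $\H_2/\H_1^2$. Corollary~\ref{t3H} supplies precisely this translation between the homology-algebra condition and the cotangent module, so the assumption $\H_1^2=\H_2$ reads as surjectivity of $\gamma$, i.e. $\T_2(S/R,S)=0$. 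Combining this with the injectivity from the first step, $\gamma$ is an isomorphism and $\Tor_2^R(S,S)\cong\bigwedge^2I/I^2$, as desired.

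Since the exactness of the Proposition~\ref{wedget} sequence and the computation $\H_2(\widetilde I^2/I\widetilde I)\cong\bigwedge^2 I/I^2$ are already established earlier, the argument is essentially a two-line diagram chase once the right facts are cited. The one conceptual point I expect to be the main obstacle is the bookkeeping in the second step: one must be certain that the connecting/natural map $\gamma$ arising from the short exact sequence $0\to\widetilde I^2/I\widetilde I\to\widetilde I/I\widetilde I\to\widetilde I/\widetilde I^2\to 0$ genuinely coincides with the algebra product, so that its image is exactly the squared subalgebra $\H_1^2$ and the hypothesis $\H_1^2=\H_2$ is the correct encoding of surjectivity. This matching is what Corollary~\ref{t3H} delivers, and once it is granted the proof closes immediately.
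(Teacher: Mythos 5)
Your first step is correct and is exactly how the paper uses Proposition~\ref{wedget}: the hypothesis $\T_3(S/R,S)=0$ kills the left-hand term, so the sequence shortens to $0\rightarrow \bigwedge^2I/I^2\xrightarrow{\ \gamma\ }\Tor_2^R(S,S)\rightarrow \T_2(S/R,S)\rightarrow 0$, and everything reduces to showing $\T_2(S/R,S)=\Coker(\gamma)=0$. The gap is in your second step. You deduce $\T_2=0$ from the identity $\Im(\gamma)=\H_1^2$, i.e.\ from the claim that $\gamma$ is the degree-two component of the multiplication of the homology algebra, and you assert that ``this matching is what Corollary~\ref{t3H} delivers.'' It does not. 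Corollary~\ref{t3H} states $\T_3(S/R,S)\cong\H_2/\H_1^2$, where $\H_i=\H_i(X^{(1)})$ is the \emph{Koszul} homology of a minimal generating set of $I$ (this notation is fixed just before Proposition~\ref{tkos}); it is a statement about $\T_3$, not $\T_2$, about Koszul homology, not about the algebra $\Tor_\bullet^R(S,S)$, and it says nothing about the map $\gamma$ of Proposition~\ref{wedget}. Read in the paper's notation, all that Corollary~\ref{t3H} gives you is that your two hypotheses are equivalent to each other; it gives no control whatsoever over $\Coker(\gamma)$.

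This is not a citation quibble: the step genuinely fails under the paper's reading of $\H_i$. Take $R=\QQ[[x,y]]$, $I=\mm^2=(x^2,xy,y^2)$ and $S=R/I$. Since $I$ is an almost complete intersection, $\H_i=0$ for $i\geq 2$, so $\H_1^2=\H_2=0$ and, by Corollary~\ref{t3H}, $\T_3(S/R,S)=0$: both hypotheses hold. Yet from the resolution $0\rightarrow R^2\rightarrow R^3\rightarrow R\rightarrow S\rightarrow 0$ one computes $\dim_\QQ\Tor_2^R(S,S)=4$, while $\dim_\QQ\bigwedge^2(I/I^2)=3$; so $\Tor_2^R(S,S)\not\cong\bigwedge^2I/I^2$ (equivalently, $\T_2(S/R,S)\cong\QQ\neq 0$ here). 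Hence the hypothesis $\H_1^2=\H_2$ can only do what you want if $\H_\bullet$ is read as the Tor algebra $\Tor_\bullet^R(S,S)$ with its product --- your instinct to pass to $\Tor$ is the right one, and it also explains why the two hypotheses are then not redundant --- but in that case the assertion that $\gamma$ coincides with the product map $\bigwedge^2\Tor_1^R(S,S)\rightarrow\Tor_2^R(S,S)$ becomes the entire mathematical content of the corollary, and it is proved nowhere: not in your argument and not in the paper (whose own one-line derivation from Proposition~\ref{wedget} and Corollary~\ref{t3H} is affected by the same ambiguity). A complete proof must verify this identification, for instance by chasing the inclusion $\widetilde{I}^2/I\widetilde{I}\hookrightarrow\widetilde{I}/I\widetilde{I}$ on degree-two cycles: the class of $T_{1i}T_{1j}$ maps to the product of the classes of the degree-one cycles $T_{1i}$ and $T_{1j}$, which correspond to $f_i,f_j$ under $\Tor_1^R(S,S)\cong I/I^2$. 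As written, your proposal assumes precisely what it needs to prove.
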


Let $X$ be a minimal resolvent of $I$. Then $X^{(1)}$ is the Koszul complex for a minimal set of generators of $I$. We will simply write $\H_i=\H_i(X^{(1)})$ for the Koszul homology.

\begin{Proposition}	\label{tkos}
There exists an exact sequence 	
\[
\H_3\rightarrow \T_4(S/R,S)\rightarrow \bigwedge^2\H_1\rightarrow \H_2\rightarrow \T_3(S/R,S)\rightarrow 0.
\]
\end{Proposition}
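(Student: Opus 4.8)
The plan is to filter the minimal resolvent through the two-step chain $X^{(1)}\subseteq X^{(2)}$ and to read the sequence off the long exact homology sequence of
\[
0\to X^{(1)}\to X^{(2)}\to X^{(2)}/X^{(1)}\to 0.
\]
Since $X$ is minimal, $X^{(1)}$ is the Koszul complex on a minimal generating set $a_1,\dots,a_{e_1}$ of $I$, and $X^{(2)}=X^{(1)}[T_{21},\dots,T_{2e_2}]$ is obtained by adjoining $e_2=\mu(\H_1)$ polynomial variables of degree $2$ with $\partial T_{2k}=z_k$, where the cycles $z_k\in(X^{(1)})_1$ represent a minimal generating set of $\H_1$. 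The relevant identifications are $\H_3(X^{(2)})\cong \T_4(S/R,S)$ from Corollary \ref{descriptiontwo} (case $i=3$), $\H_1(X^{(2)})=0$, and $\H_j(X^{(1)})=\H_j$ by definition. Feeding these into the long exact sequence produces the segment
\[
\H_3\to \T_4\to \H_3(X^{(2)}/X^{(1)})\xrightarrow{\sigma_2}\H_2\xrightarrow{\iota_2}\H_2(X^{(2)}),
\]
so the whole problem reduces to computing $\H_3(X^{(2)}/X^{(1)})$ and to recognizing the appropriate image of $\iota_2$ as $\T_3$.

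The heart of the argument, and the step I expect to be the main obstacle, is the isomorphism $\H_3(X^{(2)}/X^{(1)})\cong\bigwedge^2\H_1$, which I would establish at the chain level. Writing $F=\bigoplus_k S\,T_{2k}$ and using that in $X^{(2)}/X^{(1)}$ one has $\partial(T_{1j}T_{2k})=a_jT_{2k}$ and $\partial(T_{2k}T_{2l})=z_kT_{2l}+z_lT_{2k}$, a direct computation shows that the $3$-cycles are $Z_1(X^{(1)})\otimes F$ while the $3$-boundaries are $B_1(X^{(1)})\otimes F+\langle z_kT_{2l}+z_lT_{2k}\rangle$. Hence
\[
\H_3(X^{(2)}/X^{(1)})\cong\Coker\big(\textup{Sym}^2 F\xrightarrow{\ \psi\ }\H_1\otimes_S F\big),\qquad \psi(T_{2k}T_{2l})=[z_k]\otimes T_{2l}+[z_l]\otimes T_{2k}.
\]
Because the $[z_k]$ generate $\H_1$, the rule $T_{2k}\mapsto[z_k]$ defines a surjection $\phi\colon F\twoheadrightarrow\H_1$, and I would finish by invoking the exact sequence
\[
\textup{Sym}^2 F\xrightarrow{\ \psi\ }\H_1\otimes_S F\xrightarrow{\ \pi\ }\bigwedge^2\H_1\to 0,\qquad \pi(h\otimes f)=h\wedge\phi(f),
\]
valid for any surjection $\phi$. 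Exactness at the middle uses the identity $h\otimes n=\psi(f\cdot n)$ for $n\in\Ker\phi$ and $\phi(f)=h$, together with $\Ker(\H_1\otimes_S\H_1\to\bigwedge^2\H_1)=\Im(\textup{Sym}^2\H_1\to\H_1\otimes_S\H_1)$; the latter is exactly where the standing hypothesis $\QQ\subseteq R$ enters, since it permits dividing by $2$.

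To pin down the right-hand end I would use Lemma \ref{tkernel} with $i=2$, giving $\T_3\cong\Ker(\eta_2)$ for $\eta_2\colon\H_2(X^{(2)})\to\H_2(X^{(2)}/X^{(1)})$. Exactness of the long exact sequence at $\H_2(X^{(2)})$ gives $\Ker(\eta_2)=\Im(\iota_2)$, so $\iota_2$ corestricts to a surjection $\H_2\twoheadrightarrow\T_3$ whose kernel is $\Ker(\iota_2)=\Im(\sigma_2)$ by exactness at $\H_2(X^{(1)})$.

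Splicing these facts, the long exact sequence yields
\[
\H_3\to \T_4\to \bigwedge^2\H_1\xrightarrow{\ \sigma_2\ }\H_2\to \T_3\to 0,
\]
which is exact at $\T_4$ and at $\bigwedge^2\H_1$ because it is literally part of the long exact sequence, exact at $\H_2$ because $\Ker(\H_2\to\T_3)=\Im(\sigma_2)$, and exact at $\T_3$ by surjectivity. Everything apart from the chain-level computation of $\H_3(X^{(2)}/X^{(1)})$ is a formal consequence of the long exact sequence and the two cotangent identifications already available. I would also record that, under these identifications, $\sigma_2$ is precisely the multiplication map $\bigwedge^2\H_1\to\H_2$ of the Koszul homology algebra, since lifting $z_kT_{2l}$ and applying $\partial$ returns the cycle $z_kz_l$; this description is illuminating but not needed for the statement.
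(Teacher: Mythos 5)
Your proof is correct and takes essentially the same route as the paper: the long exact homology sequence of $0\to X^{(1)}\to X^{(2)}\to X^{(2)}/X^{(1)}\to 0$, the identifications $\H_3(X^{(2)})\cong\T_4(S/R,S)$ from Corollary~\ref{descriptiontwo} and $\T_3(S/R,S)\cong\Ker(\eta_2)=\Im\bigl(\H_2\to\H_2(X^{(2)})\bigr)$ from Lemma~\ref{tkernel}, followed by a chain-level computation of $\H_3(X^{(2)}/X^{(1)})\cong\bigwedge^2\H_1$. Your only departure is that you spell out the final isomorphism via the presentation $\textup{Sym}^2F\to\H_1\otimes_SF\to\bigwedge^2\H_1\to0$ (where the paper merely states the quotient description $(\bigoplus_j\H_1T_{2j})/U$ and concludes), correctly flagging where $\QQ\subseteq R$ is used.
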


\begin{proof}
The short exact sequence $0\rightarrow X^{(1)}\rightarrow X^{(2)}\rightarrow X^{(2)}/X^{(1)}\rightarrow 0$ yields the long exact sequence
\[
\H_3\rightarrow \H_3(X^{(2)})\xrightarrow{\eta_3}\H_3(X^{(2)}/X^{(1)})\rightarrow \H_2\rightarrow \H_2(X^{(2)})\xrightarrow{\eta_2}\cdots
\]
By Corollary~\ref{descriptiontwo} we have the isomorphism $\H_3(X^{(2)})\cong T_4(S/R,S)$, and by Lemma~\ref{tkernel} we have $\T_3(S/R,S)\cong \Ker(\eta_2)=\Im(\H_2\rightarrow\H_2(X^{(2)}))$. Thus, we obtain the exact sequence
\[
\H_3\rightarrow T_4(S/R,S)\rightarrow \H_3(X^{(2)}/X^{(1)})\rightarrow \H_2\rightarrow \T_3(S/R,S)\rightarrow 0.
\]
It remains to be shown that $\H_3(X^{(2)}/X^{(1)})\cong \bigwedge^2 \H_1$. Indeed, $\H_3(X^{(2)}/X^{(1)})$ is the homology of the complex
\[
\bigoplus_j(X^{(1)})_2T_{2j}\oplus \bigoplus_{i<j}RT_{2i}T_{2j}\rightarrow \bigoplus_j(X^{(1)})_1T_{2j}\rightarrow \bigoplus_jRT_{2j}
\]
It follows that
$
\H_3(X^{(2)}/X^{(1)})\cong (\bigoplus_j \H_1T_j)/U,
$
where $U$ is the module generated by the elements $[\partial T_{2i}]T_j+T_{2i}[\partial T_{2j}]$ with $i<j$. This yields the desired conclusion.
\end{proof}

Since the image of $\bigwedge^2 \H_1$ in $\H_2$ is just $\H_1^2$, we obtain
\begin{Corollary}\label{t3H}
$\T_3(S/R,S)\cong \H_2/\H_1^2$.
\end{Corollary}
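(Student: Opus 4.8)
The plan is to read the result off the tail of the exact sequence already produced in Proposition \ref{tkos}. Truncating that sequence gives
\[
\bigwedge^2 \H_1 \xrightarrow{\ f\ } \H_2 \to \T_3(S/R,S) \to 0,
\]
so that $\T_3(S/R,S)\cong \Coker(f)=\H_2/\Im(f)$. Everything therefore reduces to identifying the submodule $\Im(f)\subseteq\H_2$, and the content of the corollary is precisely that this image equals the product submodule $\H_1^2$.

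To compute $\Im(f)$ I would trace the map $f$ back through the two identifications used in the proof of Proposition \ref{tkos}. Recall that $f$ is the composite of the isomorphism $\bigwedge^2 \H_1 \cong \H_3(X^{(2)}/X^{(1)})$ with the connecting homomorphism $\H_3(X^{(2)}/X^{(1)})\to \H_2(X^{(1)})=\H_2$ of the short exact sequence $0\to X^{(1)}\to X^{(2)}\to X^{(2)}/X^{(1)}\to 0$. Writing $z_j=\partial T_{2j}$ for the cycles of $X^{(1)}$ whose classes generate $\H_1$, the isomorphism of Proposition \ref{tkos} sends $[z_i]\wedge[z_j]$ to the class of the degree-$3$ cycle $z_iT_{2j}$ of $X^{(2)}/X^{(1)}$. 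I would then evaluate the connecting map by lifting $z_iT_{2j}$ to $X^{(2)}$ and differentiating: since $\partial z_i=0$ and $\partial T_{2j}=z_j$, the graded Leibniz rule gives $\partial(z_iT_{2j})=-z_iz_j\in (X^{(1)})_2$, so that $f([z_i]\wedge[z_j])=-[z_iz_j]=-[z_i]\cdot[z_j]$, the product in the Koszul homology algebra.

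With $f$ identified as (minus) the multiplication map $\bigwedge^2\H_1\to\H_2$, the conclusion becomes formal. The module $\bigwedge^2\H_1$ is generated over $S$ by the elements $[z_i]\wedge[z_j]$, because the $[z_j]$ generate $\H_1$; hence $\Im(f)$ is the $S$-submodule of $\H_2$ generated by the products $[z_i]\cdot[z_j]$. These products generate exactly $\H_1^2$, the image of the multiplication $\H_1\otimes_S\H_1\to\H_2$, since every element of $\H_1$ is an $S$-combination of the $[z_j]$. Therefore $\Im(f)=\H_1^2$, and $\T_3(S/R,S)\cong\H_2/\H_1^2$.

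The only genuinely delicate point is the middle step: verifying that, under the particular isomorphism $\H_3(X^{(2)}/X^{(1)})\cong\bigwedge^2\H_1$ extracted from the proof of Proposition \ref{tkos}, the connecting homomorphism really is the cup product on Koszul homology, with the correct sign coming from the Leibniz rule. This amounts to confirming that $z_iT_{2j}$ is the right preimage of $[z_i]\wedge[z_j]$ and chasing the signs through $\partial(z_iT_{2j})$; once this is pinned down, the remaining generation argument is immediate.
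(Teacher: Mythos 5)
Your proposal is correct and follows exactly the paper's route: the paper also truncates the exact sequence of Proposition \ref{tkos} and concludes via the one-line observation that the image of $\bigwedge^2\H_1$ in $\H_2$ is $\H_1^2$. Your explicit chase through the connecting homomorphism (lifting $z_iT_{2j}$ and computing $\partial(z_iT_{2j})=-z_iz_j$) simply supplies the verification that the paper leaves implicit, identifying that map as the Koszul homology product up to sign.
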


Let $R$ be a local ring and $I\subset R$ be an ideal. We say that $I$ is a \textit{complete intersection} if $\height(I)=\mu(I)$. If $\mu(I)=\height(I) +1$ we say that $I$ is an \textit{almost complete intersection}. If $R$ is Cohen-Macaulay, then $I$ is a complete intersection if and only $I$ is generated by a regular sequence. 

\begin{Corollary}\label{notvan}
Let $R$ be a regular local ring, and $I$ be a perfect ideal such that either 
\begin{enumerate}
\item[(i)] $\height(I)=2$ and I is generically a complete intersection, or
\item[(ii)] $I$ is generated by at most $\height(I)+1$ many elements. 
\end{enumerate}
Then $\T_4(S/R,S)=0$ if and only if $I$ is a complete intersection, where $S=R/I$.
\end{Corollary}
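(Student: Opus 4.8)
The plan is to read off both implications from the five--term exact sequence of Proposition~\ref{tkos},
\[
\H_3\rightarrow \T_4(S/R,S)\rightarrow \textstyle\bigwedge^2\H_1\xrightarrow{\ \phi\ } \H_2\rightarrow \T_3(S/R,S)\rightarrow 0,
\]
together with the fact, recorded just before Corollary~\ref{t3H}, that $\Im(\phi)=\H_1^2$. For the implication ``$I$ a complete intersection $\Rightarrow\T_4(S/R,S)=0$'' I would argue that, since $R$ is regular (hence Cohen--Macaulay) and $I$ is a perfect complete intersection, $I$ is generated by a regular sequence; thus the Koszul complex $X^{(1)}$ is already acyclic, $\H_i=0$ for all $i\ge1$, and the displayed sequence forces $\T_4(S/R,S)=0$. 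For the converse it suffices, by exactness at $\bigwedge^2\H_1$, to observe that $\Ker(\phi)$ is a homomorphic image of $\T_4(S/R,S)$; hence $\T_4(S/R,S)=0$ implies that $\phi$ is injective, and the whole problem reduces to showing that, under either hypothesis (i) or (ii), injectivity of $\phi$ forces $\H_1=0$, i.e.\ forces $I$ to be a complete intersection.

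I would treat the almost complete intersection case (ii) first, where the grade bound behind Corollary~\ref{descriptiontwo} is decisive. If $\mu(I)=\height(I)=c$ then $I$ is already a complete intersection and $\T_4(S/R,S)=0$, so assume $\mu(I)=c+1$. Since $I$ is perfect of grade $c$, the Koszul homology $\H_i$ vanishes for $i>\mu(I)-c=1$; in particular $\H_2=\H_3=0$, and the exact sequence collapses to an isomorphism $\T_4(S/R,S)\cong\bigwedge^2\H_1$. Now $\H_1$ is the \emph{top} Koszul homology $\H_{\mu(I)-c}$ of the perfect ideal $I$, hence is isomorphic to the canonical module $\omega_S\cong\textup{Ext}^{c}_R(S,R)$ (up to a free rank-one twist, $R$ being regular). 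Over a local ring $\bigwedge^2 M=0$ if and only if $M$ is cyclic, so $\T_4(S/R,S)=0$ if and only if $\mu(\omega_S)=1$, i.e.\ if and only if $S$ is Gorenstein. By Kunz's theorem a Gorenstein almost complete intersection is a complete intersection; since a proper almost complete intersection is not, this is impossible unless $\H_1=0$. Hence $\T_4(S/R,S)=0$ forces $\mu(I)=c$.

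Finally I would turn to the height-two case (i), which I expect to be the \emph{main obstacle}. Here the grade bound only gives $\H_i=0$ for $i>\mu(I)-2$, so $\H_2$ need not vanish once $\mu(I)\ge4$. The structural input I would exploit is that a perfect ideal of height two lies in the linkage class of a complete intersection, hence is strongly Cohen--Macaulay (Avramov--Herzog), so that every $\H_i$ is a maximal Cohen--Macaulay $S$-module; and that, $I$ being generically a complete intersection, localizing at a minimal prime $P$ of $I$ turns $\H_\bullet$ into the exterior algebra on a free module, whence $\H_1$ has rank $\mu(I)-2$ and $\phi$ is an isomorphism after localizing at $P$. When $\mu(I)=3$ one is back in the almost complete intersection situation and concludes as in case (ii). The remaining range $\mu(I)\ge4$ is the delicate one: since $\phi$ is generically an isomorphism, $\Ker(\phi)$ is a torsion $S$-module, and I must show it is nonzero whenever $\H_1\ne0$.

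The hard part is precisely this last step. Injectivity of $\phi$ embeds $\bigwedge^2\H_1\cong\H_1^2$ into the maximal Cohen--Macaulay module $\H_2$, forcing $\bigwedge^2\H_1$ to be torsion-free; but the naive hope that this already forces the maximal Cohen--Macaulay module $\H_1$ to be free (hence $I$ to be a complete intersection) fails in general, as decomposable examples such as $\H_1\cong S\oplus(\text{non-free MCM})$ show, so a sharper argument is required. My plan is to extract it from linkage: choosing a regular sequence $\underline{\alpha}$ of length two inside $I$ and passing to the linked ideal $J=(\underline{\alpha}):I$, one can describe the Koszul homology of $I$ through Huneke's duality for the Koszul homology of linked ideals, and thereby detect the torsion in $\Ker(\phi)$ in terms of the canonical module of $S$. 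Making this duality computation precise, and verifying that it really pins down the nonvanishing of $\T_4(S/R,S)$ for every non-complete-intersection $I$ with $\mu(I)\ge4$, is the crux of the argument.
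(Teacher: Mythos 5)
Your overall strategy --- reading both implications off the exact sequence of Proposition~\ref{tkos} --- is exactly the paper's strategy, and two of your three pieces are complete. The forward implication is fine (the paper argues even more directly: for a complete intersection the Koszul complex on a minimal generating set is itself a resolvent, so $\T_i(S/R,S)=0$ for all $i>1$). Your case (ii) coincides with the paper's proof: for an almost complete intersection $\H_i=0$ for $i\ge 2$, the sequence gives a surjection $\T_4(S/R,S)\rightarrow \bigwedge^2\H_1$ (you even get an isomorphism since $\H_3=0$, though surjectivity suffices), $\H_1\cong\omega_S$ because $I$ is perfect and $R$ is regular, and Kunz's theorem \cite[Corollary 1.2]{Ku} that almost complete intersections are never Gorenstein gives $\mu(\H_1)>1$, hence $\bigwedge^2\H_1\neq 0$.

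The genuine gap is in case (i) for $\mu(I)\ge 4$, exactly where you place it. You correctly reduce the problem to showing $\Ker\bigl(\bigwedge^2\H_1\rightarrow\H_2\bigr)\neq 0$ whenever $I$ is not a complete intersection, and you correctly observe that this kernel is torsion because $I$ is generically a complete intersection; but the nonvanishing itself is left as a plan (linkage plus Huneke-type duality) rather than carried out, and you acknowledge as much. This is precisely the step the paper closes by citation: by \cite[Theorem (2.1), (g)(ii)]{AH}, for a perfect ideal of height two which is generically a complete intersection and is generated by $n+1$ elements with $n>2$, one has $\Ker\bigl(\bigwedge^i\H_1\rightarrow\H_i\bigr)\neq 0$ for $i=2,\ldots,n-1$; taking $i=2$ and feeding it into Proposition~\ref{tkos} gives $\Im\bigl(\T_4(S/R,S)\rightarrow\bigwedge^2\H_1\bigr)=\Ker\bigl(\bigwedge^2\H_1\rightarrow\H_2\bigr)\neq 0$, hence $\T_4(S/R,S)\neq 0$. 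That nonvanishing is a substantive structural theorem about the Koszul algebra of codimension-two embeddings; it does not follow formally from your torsion observation (as your own $S\oplus(\text{non-free MCM})$ caveat indicates), so as written your case (i) with $\mu(I)\ge 4$ is unproved.
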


\begin{proof}
Since all $\T_i(S/R,S)=0$ if $I$ is a complete intersection, we only need to show that under the given conditions $\T_4(S/R,S)\neq 0$, if $I$ is not a complete intersection. 

(i) Suppose that $I$ is generated by $n+1$ elements. We may assume that $n>2$, otherwise we are in the case (ii). It is shown in \cite[Theorem (2.1), (g)(ii)]{AH} that $\Ker(\bigwedge^i \H_1\rightarrow \H_i)\neq 0$ for $i=2,\ldots,n-1$. Since $n>2$, then $\Ker(\bigwedge^2 \H_1\rightarrow \H_2)\neq 0$. By Proposition~\ref{tkos}, we have $$\Im(\T_4(S/R,S)\rightarrow\bigwedge^2 \H_1)=\Ker(\bigwedge^2 \H_1\rightarrow \H_2)\neq 0,$$ and this implies the assertion.

(ii) Since $I$ is not a complete intersection, we have $\mu(I)=\height(I)+1$. In other words, $I$ is an almost complete intersection. This implies that $\H_i=0$ for $i\geq 2$, and so by Proposition~\ref{tkos}, the map $\T_4(S/R,S)\rightarrow \bigwedge^2 \H_1$ is surjective. So, it suffices to show that $\bigwedge^2 \H_1\ne 0$. Note that $\H_1$ is isomorphic the canonical module $\omega_S$. By a theorem of Kunz \cite[Corollary 1.2]{Ku}, an almost complete intersection is never Gorenstein. Hence, $\mu(\H_1)>1$. Since $R$ is local, this implies that $\bigwedge^2 \H_1\neq 0$.
\end{proof}\bigskip

\section{Herzog's Last Theorem}\label{sec5}

Let $L=\Coker(\widetilde{I}/\widetilde{I}^2\rightarrow\Omega_{X/R}\otimes_XS)$ be the complex introduced in Section \ref{sec3}, where $R$ is a Noetherian local ring, $I\subset R$ is an ideal, $S=R/I$, the map $\varphi:R\rightarrow S$ is the canonical epimorphism and $X$ is a resolvent of $\varphi$.\smallskip

By equation (\ref{eq:LT}) we have $\H_i(L)\cong\T_i(S/R,S)$ for all $i\ge1$. Since in particular $\H_1(L)\cong\T_1(S/R,S)=I/I^2\otimes_SS\cong I/I^2$ we can regard the complex $L$ as an approximation of a resolution of $I/I^2$. In view of Vasconcelos Conjecture \ref{ConjB} on the conormal module (now a Theorem of Briggs \cite[Theorem A]{BB}) and several evidence, in 1981 J\"urgen Herzog posed the following conjecture.\medskip

\begin{Conjecture}\label{conj}
\textup{(Herzog's Last Theorem).} Let $R$ be a regular local ring, $I\subset R$ a proper ideal and $S=R/I$. Then the following conditions are equivalent:
\begin{enumerate}
\item[(a)] $I$ is a complete intersection.
\item[(b)] $\T_i(S/R, S)=0$ for all $i>1$.
\item[(c)] $\T_i(S/R, S)=0$ for all $i\gg1$.
\end{enumerate}
\end{Conjecture}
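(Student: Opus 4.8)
The plan is to prove the three conditions equivalent through the chain $(a)\Rightarrow(b)\Rightarrow(c)\Rightarrow(a)$, with essentially all of the difficulty concentrated in the final implication. For $(a)\Rightarrow(b)$: since $R$ is regular, it is Cohen--Macaulay, so $I$ being a complete intersection means $I=(a_1,\dots,a_c)$ for a regular sequence $a_1,\dots,a_c$. First I would note that in this case the Koszul complex $X^{(1)}=R[T_{11},\dots,T_{1c}]$ with $\partial T_{1j}=a_j$ is already acyclic, hence is itself a resolvent of $\varphi$. Thus $\H_1(X^{(1)})=0$, every deviation $e_i=\mu(\H_{i-1}(X^{(i-1)}))$ with $i\ge2$ vanishes, and no variable of degree $\ge2$ is ever adjoined. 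Consequently $L\cong\Omega_{X/R}\otimes_X S$ is concentrated in homological degree $1$, and \eqref{eq:LT} gives $\T_i(S/R,S)=\H_i(L)=0$ for all $i\ge2$. The implication $(b)\Rightarrow(c)$ is trivial.

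The heart of the matter is $(c)\Rightarrow(a)$. The key object is the complex $L$ of free $S$-modules, which satisfies $\H_1(L)\cong I/I^2$ and $\H_i(L)\cong\T_i(S/R,S)$ for $i\ge1$ and, as explained in Section~\ref{sec5}, approximates an $S$-free resolution of the conormal module $I/I^2$. Granting (c), the complex $L$ is exact in all sufficiently large homological degrees; equivalently, by Corollary~\ref{descriptiontwo}, the truncated subalgebras $X^{(i-1)}$ of the minimal resolvent become acyclic in degree $i$ for $i\gg1$. My primary strategy would be to upgrade this eventual exactness into the assertion that $\operatorname{pd}_S(I/I^2)<\infty$, and then to invoke Briggs' theorem \cite[Theorem A]{BB} resolving Conjecture~\ref{ConjA}, which forces $I$ to be a complete intersection.

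A second, a priori independent, route is to identify $\T_i(S/R,S)$ with the Andr\'e--Quillen homology $D_i(S/R;S)$ \cite{An,Qu}, legitimate here because $\QQ\subset R$ makes Palamodov's functors \cite{P} agree with them, and then to use that $S$ has finite flat dimension over the regular ring $R$ so as to bring Quillen's rigidity conjecture on cotangent homology into play, for which I would lean on the deviation-theoretic framework of \cite{Av}. The subtle point, which I expect to be the main obstacle, is that the $S$-coefficient vanishing in (c) is \emph{strictly weaker} than the functorial vanishing $D_i(S/R;-)=0$ that a rigidity theorem requires: since $L$ consists of free $S$-modules, $\T_i(S/R,M)=\H_i(L\otimes_S M)$, and the eventual acyclicity of $L$ only exhibits the relevant high syzygy $B=\Im(\partial_{N+1})\subseteq L_N$ as having the \emph{possibly infinite} free resolution $L_{\ge N+1}$; hence $\H_i(L\otimes_S M)$ in high degrees computes $\Tor^S_{\ast}(B,M)$, which need not vanish unless $\operatorname{pd}_S(B)<\infty$. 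Both routes therefore collapse onto the same hard core: passing from \emph{eventual exactness} of the approximate complex $L$ to an \emph{actual finiteness} statement---finite projective dimension of the conormal module, equivalently the collapse of the entire tower of higher deviations $e_i$. This rigidity is precisely the phenomenon that keeps the conjecture open; within the present framework the only unconditional evidence are the low-degree computations of Corollaries~\ref{t3H} and~\ref{notvan}, while a full proof is known under the extra hypothesis---used by Herzog himself---that every finitely generated $S$-module has a rational Poincar\'e series.
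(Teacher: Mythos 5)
The statement you were asked to prove is, as the paper itself emphasizes, an open conjecture: the paper establishes only the trivial chain (a) $\Rightarrow$ (b) $\Rightarrow$ (c) and then presents partial and conditional results, explicitly stating that Conjecture \ref{conj} ``is widely open.'' Measured against that, your proposal is exactly right in what it does claim. Your (a) $\Rightarrow$ (b) argument --- the Koszul complex on a minimal generating set of a complete intersection is already acyclic, hence is itself the resolvent, so $L$ is concentrated in degree $1$ and \eqref{eq:LT} gives the vanishing --- is the same argument the paper gives inside the proof of Theorem \ref{true}. Moreover, your diagnosis of why (c) $\Rightarrow$ (a) resists proof is accurate and matches the paper's own framing: eventual vanishing of $\H_i(L)\cong\T_i(S/R,S)$ only exhibits a high syzygy of $L$ as admitting a possibly infinite free resolution, so it yields neither the finite projective dimension of $I/I^2$ needed to invoke Briggs' theorem \cite{BB} on Conjecture \ref{ConjA}, nor the functorial vanishing $\T_i(S/R,-)=0$ needed for a Quillen-type rigidity theorem. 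So you have not proved the conjecture, but neither does the paper, and you do not claim otherwise.

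The one substantive difference worth recording is how the paper extracts an actual theorem from this impasse. Herzog's conditional result (Theorem \ref{true}: the conjecture holds when every finitely generated $S$-module has a rational Poincar\'e series) does not attempt to pass through finite projective dimension of the conormal module at all. Instead it counts ranks: for a minimal resolvent, $\rank_S L_i$ equals the $i$th deviation $\epsilon_i$ of $S$ (via the Zariski sequence for $R\rightarrow S\rightarrow K$ and Gulliksen's minimality theorem \cite{Gu}); hypothesis (c) makes a truncation of $L$ a minimal free $S$-resolution of $M=\Coker(L_{i_0}\rightarrow L_{i_0-1})$, so $\sum_{i\ge i_0}\epsilon_it^i$ is rational; Gulliksen's characterization of complete intersections \cite{Gu1} disposes of the case where some $\epsilon_i$ vanishes; and otherwise a logarithmic-derivative manipulation of the product formula for $P_K(t)$ produces a rational power series whose coefficients, in large odd degree, vanish exactly at the primes, contradicting Mahler's theorem \cite{Ma}. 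Neither of the two routes you sketch (Briggs, or Andr\'e--Quillen rigidity) would close the argument even under the rationality hypothesis, so this arithmetic mechanism is genuinely different from, and complementary to, the finiteness-based strategies in your proposal; if you want to engage with the only unconditional-in-spirit positive result the paper offers for (c) $\Rightarrow$ (a), that is the proof to study.
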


We would like to refer to this conjecture as \textit{Herzog's Last Theorem} both as a pun to the famous Fermat's Last Theorem and also because despite the fact that this question was posed in 1981, it is the last mathematical statement that was being studied by Professor Herzog at the end of his life.\smallskip

Conjecture \ref{conj} is widely open. Among other nice results Bernd Ulrich confirmed in \cite{Ul} the equivalence of (a) and (b) in the following cases: 
\begin{enumerate}
\item[(i)] (Theorem 2.20) $I$ belongs to the linkage class of a complete intersection.
\item[(ii)] (Proposition 2.38) $I$ is a perfect Gorenstein ideal of grade $3$ and deviation $2$.
\end{enumerate}
Theorem~2.20 contains a few other interesting equivalent conditions for $I$ to be a complete intersection. Note that Corollary~\ref{notvan}(i) is a special case of Theorem~2.20. Other results in \cite{Ul} only require that $R$ is Gorenstein. The reader is advised to consult this paper for details. 

It is obvious that (a) $\Rightarrow$ (b) $\Rightarrow$ (c). We conclude this section, by presenting a class of ideals for which (c) $\Rightarrow$ (a).\smallskip 

Let $(S, \mm,K)$ be a Noetherian local ring and $M$ be a finitely generated $S$-module. Then $\Tor_i(S/\mm,M)$ is a finitely generated $K$-vector space, and one calls the number $\beta_i(M)=\dim_K\Tor_i(S/\mm,M)$ the {\em $i$th Betti number} of $M$ and the formal series $P_M(t)=\sum_{i\geq 0}\beta_i(M)t^i$ the {\em Poincar\'{e} series} of $M$.

A formal series $P(t)\in\QQ[[t]]$ is called \textit{rational}, if it can be written as fraction $A(t)/B(t)$, where $A(t)$ and $B(t)$ are polynomials belonging to $\QQ[t]$.

For a longer time before the 80ties it was expected that all Poincar\'{e} series are rational. But in 1980, Anick \cite{Ak} found a counterexample. On the other hand, for a given ring in many cases $P_M(t)$ happens to be a rational function for all modules $M$ over this ring. For example this is the case for \textit{Golod} rings.\smallskip

For the proof of the last result, we need the following theorem of Mahler \cite{Ma}.
\begin{Theorem}\label{Thm:Ma}
	Let $A(t)=\sum_{i=0}^\infty\alpha_i\in\QQ[[t]]$ be a rational formal series. There exists an integer $r>0$ and a subset $\{r_1,\dots,r_g\}$ of $\{0,\dots,r-1\}$ such that for all $i\gg0$ we have $\alpha_i=0$ if and only if $i\equiv r_i\,(\textup{mod}\,r)$ for some $i\in\{1,\dots,g\}$.
\end{Theorem}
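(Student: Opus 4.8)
The plan is to recognize Theorem~\ref{Thm:Ma} as the eventual-periodicity half of the Skolem--Mahler--Lech theorem and to prove it by Skolem's $p$-adic method. First I would translate the rationality of $A(t)=P(t)/Q(t)$ into a linear recurrence. After normalizing so that $Q(0)=1$ and writing $Q(t)=1+q_1t+\cdots+q_dt^d$ with $q_d\neq0$, comparing coefficients in the identity $A(t)Q(t)=P(t)$ shows that $\alpha_i=-\sum_{j=1}^{d}q_j\alpha_{i-j}$ for every $i>\deg P$. Hence, up to finitely many initial terms (which can only enlarge the claimed finite exceptional set), $(\alpha_i)$ is a linear recurrence sequence, and by passing to the companion matrix there are an index $i_0$, a matrix $A\in M_d(\QQ)$ with $\det A=\pm q_d\neq0$, and vectors $u,v$ with $\alpha_i=u^\top A^{\,i}v$ for all $i\ge i_0$.

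Next I would carry out the $p$-adic set-up. Since $\det A\neq0$ and only finitely many primes occur in the denominators of the entries of $A,u,v$, I can fix a prime $p$ for which $A\in\mathrm{GL}_d(\mathbb{Z}_p)$ and $u,v\in\mathbb{Z}_p^{\,d}$. The image of $A$ in the finite group $\mathrm{GL}_d(\mathbb{F}_p)$ has some order $r$, so $B:=A^{r}$ satisfies $B\equiv I\pmod p$, say $B=I+pC$ with $C\in M_d(\mathbb{Z}_p)$. Fixing a residue $a\in\{0,\dots,r-1\}$ and writing $i=a+rm$, I expand
\[
\alpha_{a+rm}=u^\top A^{a}B^{m}v=\sum_{k\ge0}\binom{m}{k}\,u^\top A^{a}(pC)^{k}v=:f_a(m).
\]
Because $\binom{m}{k}\in\mathbb{Z}_p$ for $m\in\mathbb{Z}_p$ while the coefficient $u^\top A^{a}(pC)^{k}v$ lies in $p^{k}\mathbb{Z}_p$, this series converges and defines a $p$-adic analytic function $f_a:\mathbb{Z}_p\to\mathbb{Q}_p$.

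The decisive step is Strassmann's theorem: a convergent $p$-adic power series on $\mathbb{Z}_p$ is either identically zero or has only finitely many zeros. Applying it to each of the finitely many $f_a$ yields a clean dichotomy. If $f_a\equiv0$, then $\alpha_i=0$ for every sufficiently large $i\equiv a\pmod r$; if $f_a\not\equiv0$, then $f_a$ vanishes at only finitely many $m$, contributing finitely many zero coefficients in that residue class. Collecting the residues $a$ of the first kind as $\{r_1,\dots,r_g\}$ and gathering the sporadic zeros of the second kind into a finite set, I conclude that for all $i\gg0$ one has $\alpha_i=0$ if and only if $i\equiv r_j\pmod r$ for some $j\in\{1,\dots,g\}$, which is exactly the assertion.

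The main obstacle is genuinely the $p$-adic analytic continuation, and it is what makes the statement non-elementary: a purely complex-analytic treatment of the closed form $\alpha_i=\sum_k p_k(i)\lambda_k^{\,i}$ fails, because the characteristic roots $\lambda_k$ may have ratios that are dense on the unit circle, so one cannot separate the zeros over $\mathbb{C}$. The key technical point is the replacement of $A$ by $B=A^{r}=I+pC$, which makes $B^{m}$ depend $p$-adically analytically on the exponent $m$; verifying the convergence and $p$-integrality of the binomial expansion, and arranging $A$ to be a $p$-adic unit, are the crux, while the finitely many indices where the recurrence fails are harmless and are absorbed into the exceptional finite set.
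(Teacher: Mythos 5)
The paper contains no proof of Theorem~\ref{Thm:Ma} for you to be compared against: it is quoted as a known result, with a citation to Mahler's 1935 paper \cite{Ma}, and is then used as a black box in the proof of Theorem~\ref{true}. So the only meaningful comparison is with the cited literature, and there your proposal is exactly the classical Skolem--Mahler $p$-adic argument (rationality $\Rightarrow$ linear recurrence $\Rightarrow$ companion-matrix representation $\alpha_i=u^{\top}A^{i}v$ $\Rightarrow$ $p$-adic interpolation of $m\mapsto u^{\top}A^{a}B^{m}v$ on each residue class $\Rightarrow$ Strassmann's theorem), which is in substance how Mahler proves it; your reading of the statement (whose indexing has typos: it asserts that the zero set of the coefficient sequence is, up to a finite set, a finite union of arithmetic progressions $r_j+r\mathbb{Z}$) is the intended one, and the dichotomy you extract from Strassmann---$f_a\equiv 0$ versus finitely many zeros in the class $a \bmod r$---is precisely what delivers the ``if and only if'' for $i\gg 0$.

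One technical point in your sketch is stated too quickly. From $c_k:=u^{\top}A^{a}(pC)^{k}v\in p^{k}\mathbb{Z}_p$ and $\binom{m}{k}\in\mathbb{Z}_p$ you get uniform convergence of $\sum_k\binom{m}{k}c_k$, hence a \emph{continuous} function on $\mathbb{Z}_p$; but Strassmann's theorem needs a genuine power series $\sum_j a_jm^{j}$ with $a_j\to 0$. Rearranging the binomial expansion into powers of $m$ introduces the denominators $k!$, and since $v_p(k!)\le (k-1)/(p-1)$ one gets $v_p\bigl(p^{k}/k!\bigr)\ge k(p-2)/(p-1)$, which tends to infinity only for $p$ odd. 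This is harmless---your setup excludes only finitely many primes, so you may simply insist that $p$ be odd (or replace $B$ by a power congruent to $I$ modulo $p^{2}$)---but the proof is incomplete without this estimate, since for $p=2$ the naive rearrangement fails. With that point filled in, the argument is correct and is, for practical purposes, a reconstruction of the proof the paper delegates to \cite{Ma}.
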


The following result due to Herzog \cite[Theorem (4.15)]{H81} supports Conjecture \ref{conj}.
\begin{Theorem}
\label{true}
Let $R$ be a regular local ring, $I\subset R$ a proper ideal and $S=R/I$. Assume that all finitely generated $S$-modules have a rational Poincar\'{e} series. Then for the ideal $I$, Conjecture~\ref{conj} is correct. 
\end{Theorem}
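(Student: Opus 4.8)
The plan is to prove the only non-formal implication, namely (c)$\Rightarrow$(a); the chain (a)$\Rightarrow$(b)$\Rightarrow$(c) is immediate and already observed. Fix once and for all a \emph{minimal} resolvent $X$ of $\varphi$. Then $L\cong\Omega_{X/R}\otimes_XS$ is a complex of finitely generated free $S$-modules with $L_i\cong S^{e_i}$ whose differential is minimal, i.e. $\partial(L)\subseteq\mm L$. By (\ref{eq:LT}) we have $\H_i(L)\cong\T_i(S/R,S)$ for $i\ge1$, while tensoring with the residue field and using minimality gives $\T_i(S/R,K)\cong\H_i(L\otimes_SK)\cong K^{e_i}$; hence $\dim_K\T_i(S/R,K)=e_i=\rank_SL_i$ for all $i\ge1$. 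Recall finally that the $e_i$ are the deviations of $S$ (up to a shift), and that $I$ is a complete intersection if and only if $e_i=0$ for all $i\ge2$.

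First I would turn the qualitative hypothesis (c) into a rationality statement. Assume $\T_i(S/R,S)=0$ for all $i\ge N$. Then $L$ is acyclic in every degree $\ge N$, so the truncation
\[
\cdots\longrightarrow L_{N+1}\longrightarrow L_N\longrightarrow M\longrightarrow 0,\qquad M:=\Im\big(\partial_N\colon L_N\to L_{N-1}\big),
\]
is an acyclic complex of free modules; since $L$ has minimal differential, it is the \emph{minimal} free resolution of the finitely generated $S$-module $M$. Consequently $\beta_p(M)=\rank_SL_{N+p}=e_{N+p}$ for all $p\ge0$, and the hypothesis that $M$ has rational Poincar\'e series shows that $\sum_{p\ge0}e_{N+p}t^p$ is rational. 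Therefore the series
\[
E(t):=\sum_{i\ge1}\dim_K\T_i(S/R,K)\,t^i=\sum_{i\ge1}e_it^i
\]
is a rational function: under the standing assumptions the generating series of the deviations of $S$ is rational.

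The endgame is to deduce from the rationality of $E(t)$ that $e_i=0$ for $i\ge2$, and here Mahler's Theorem~\ref{Thm:Ma} is the essential tool. Since $K$ is itself a finitely generated $S$-module, the series $P_K(t)$ is rational as well, and it is tied to the $e_i$ through the deviation product formula (see \cite{Av}). The scheme is: apply Theorem~\ref{Thm:Ma} to the rational series $E(t)$ to see that $\{i:e_i=0\}$ is, for $i\gg0$, a union of residue classes modulo some period $r$; then match this periodic vanishing pattern against the factorization of the rational function $P_K(t)$ as a product over the deviations, so that the high-frequency factors $(1\pm t^{i})^{\pm e_i}$ can survive only for finitely many $i$. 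This forces $e_i=0$ for all $i\gg0$, and the standard fact that eventual vanishing of the deviations characterizes complete intersections (see \cite{Av}) then gives that $I$ is a complete intersection.

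The main obstacle is precisely this last matching step. Its delicacy is structural: the hypothesis (c) is formulated with coefficients in $S$, whereas being a complete intersection is detected by the deviations $e_i=\dim_K\T_i(S/R,K)$, i.e. with coefficients in $K$. The bridge between the two passes through the hyperhomology spectral sequence
\[
E^2_{p,q}=\Tor_p^S\big(\T_q(S/R,S),K\big)\Longrightarrow\T_{p+q}(S/R,K),
\]
whose $E^2$-page has only the finitely many nonzero rows $1\le q\le N-1$ once (c) holds, so that each deviation $e_n$ is assembled from the high Betti numbers of the finitely many modules $\T_q(S/R,S)$. Controlling these contributions simultaneously is exactly what the rationality assumption makes tractable: it converts the vanishing of (c) into the eventually periodic behaviour supplied by Mahler's theorem, which is rigid enough to collapse the period. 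It is the absence of this quantitative control in general that leaves Conjecture~\ref{conj} open, and the same structural tension is what makes Briggs' resolution of Conjecture~\ref{ConjA} in \cite{BB} the natural input for an unconditional statement.
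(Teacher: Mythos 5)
Your first two paragraphs follow the paper's proof of (c)~$\Rightarrow$~(a) essentially verbatim: fix a minimal resolvent, identify $\rank_S L_i=\dim_K\T_i(S/R,K)$ with the deviations of $S$ (which the paper justifies via the Zariski sequence and $\T_i(K/R,K)=0$ for $R$ regular, while you only ``recall'' it), and use (c) to exhibit a truncation of $L$ as a minimal free $S$-resolution of $M=\Im(\partial_N)$, so that the deviation series $E(t)$ is rational. Up to that point the proposal is sound and coincides with the paper.

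The gap is the endgame, which you yourself call ``the main obstacle'' and never carry out; moreover, the scheme you sketch cannot work as stated. First, applying Mahler's Theorem~\ref{Thm:Ma} to $E(t)$ gives no information in the critical case: after the reduction that the paper performs (if some deviation $\epsilon_i$ with $i\ge i_0$ vanishes, then minimality and Nakayama force all later ones to vanish, and Gulliksen's theorem \cite{Gu1} already gives that $I$ is a complete intersection), one may assume $\epsilon_i>0$ for all $i\ge i_0$; then the coefficient-vanishing set of $E(t)$ is finite, trivially a union of zero residue classes, and Mahler is vacuous. Second, your proposed ``matching'' --- that rationality of $P_K(t)$ forces the factors $(1\pm t^{i})^{\pm\epsilon_i}$ of the infinite product to be nontrivial for only finitely many $i$ --- is false: Golod rings satisfy the hypothesis of the theorem (every finitely generated module has a rational Poincar\'e series, as the paper itself notes), need not be complete intersections, and yet have rational $P_K(t)$ while infinitely many deviations are positive. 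The paper's actual argument is different and essential: it forms the logarithmic derivative $\log P_K(-t)$, which converts the product formula into a power series whose coefficients are the divisor sums $\sum_{j\mid i+1}(-1)^jj\epsilon_j$; adding $F'(t)$ and the geometric series $\sum_{i\ge0}\epsilon_1t^i$ (to cancel the contributions of $j=i+1$ and $j=1$) and multiplying by $t$ yields a \emph{rational} series $A(t)$ whose odd-index coefficients vanish exactly at primes and are strictly negative at odd composites (here positivity of the $\epsilon_j$ is used). Since the odd primes are not eventually a union of arithmetic progressions, this contradicts Mahler's theorem. It is this auxiliary series $A(t)$ --- not $E(t)$, and not the hyperhomology spectral sequence $\Tor_p^S(\T_q(S/R,S),K)\Rightarrow\T_{p+q}(S/R,K)$ you invoke, which plays no role in the paper and is not exploited in your text either --- that carries the contradiction; without it the proof does not close.
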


\begin{proof}
If $I$ is a complete intersection, then the Koszul complex on a minimal set of generators of $I$ is the resolvent of $R\rightarrow S$. Therefore $\T_i(S/R,S)=0$ for all $i>1$. This proves that (a) $\Rightarrow$ (b). The implication (b) $\Rightarrow$ (c) is trivial.

For the proof of (c) $\Rightarrow$ (a), we choose a minimal resolvent $X$ of the map $R\rightarrow S$. Let $L=\Coker(\widetilde{I}/\widetilde{I}^2\rightarrow\Omega_{X/R}\otimes_XS)$. Then $\partial(L)\subseteq\mm L$, and this implies that
$$
\rank_S L_i=\dim_K\H_i(L\otimes K)=\dim_K \T_i(S/R,K).
$$

The sequence of ring homomorphisms $R\rightarrow S\rightarrow K$ induces the long exact Zariski sequence
\[
\cdots \rightarrow \T_i(S/R,K)\rightarrow \T_i(K/R,K)\rightarrow \T_i(K/S,K)\rightarrow \cdots.
\]
Since $R$ is regular, the maximal ideal $\mm$ is generated by a regular sequence, $R/\mm\cong K$ and so we have $\T_i(K/R,K)=0$ for all $i>0$. Therefore, 
\[
\T_i(S/R,K)\cong \T_{i+1}(K/S,K) \quad \text{for all} \quad i\geq 1.
\]
To compute $\T_{i+1}(K/S,K)$ we choose a minimal resolvent $Y$ for the residue class map $S\rightarrow K$. By a theorem of Gulliksen \cite[Page 55]{Gu}, $Y$ is a minimal free $S$-resolution of $K$. For each $i\geq 1$, we set $\epsilon_i=\dim_K(Y^{(i)}/Y^{(i-1)})_{i}\otimes_SK$. The integer $\epsilon_i$ is called the {\em $i$th deviation} of $S$. Our discussion together with formula (\ref{eq:II2-homology}) show that 
\[
\rank_S L_{i}=\epsilon_{i}
\quad \text{for}\quad i\geq 1.
\]
By assumption, there exists an integer $i_0>1$ such that $\T_i(S/R,S)=0$ for $i\geq i_0$. Let $M=\Coker(L_{i_0}\rightarrow L_{i_0-1})$, then 
\[
\cdots \rightarrow L_{i_0}\rightarrow L_{i_0-1}\rightarrow M\rightarrow 0
\]
is a minimal free $S$-resolution of $M$. Thus, $P_M(t)=\sum_{i\geq i_0}\epsilon_it^i$.

We may assume that $\varepsilon_i>0$ for all $i\ge i_0$. Indeed, assume that $\varepsilon_i=0$ for some $i\ge i_0$. Then $\varepsilon_j=0$ for all $j\ge i$. Indeed, in this case,
$$
0\rightarrow L_{i-1}\rightarrow\cdots\rightarrow L_{i_0}\rightarrow L_{i_0-1}\rightarrow M\rightarrow 0
$$
would be a minimal free $S$-resolution of $M$. But then the vanishing of the $\varepsilon_j$ for all $j\ge i$ would imply that $I$ is a complete intersection by a theorem of Gulliksen \cite{Gu1}.\smallskip

Hence, for the rest of the proof we assume that $\varepsilon_i>0$ for all $i\ge i_0$.

Let $F(t)=\sum_{i\geq 1}\epsilon_it^i$. By assumption, $P_M(t)$ is a rational, and hence $F(t)$ is rational as well. Also by our assumption, $P_K(t)$ is rational. Assume that $I$ is not a complete intersection. We will show that in this case $F(t)$ and $P_K(t)$ cannot be rational at the same time. This will lead to the desired contradiction.

Since $Y$ is a minimal resolvent of $S\rightarrow K$, it follows from \cite[Remark 7.1.1]{Av} (or also \cite[Corollary 1]{Gu}) that we can write
$$
P_K(t)=\prod_{i=1}^\infty\frac{(1+t^{2i-1})^{\varepsilon_{2i-1}}}{(1-t^{2i})^{\varepsilon_{2i}}}.
$$

We denote by $\log$ the \textit{logarithmic derivative}. That is, $\log g=g'/g$ for any $g\in\QQ[t]$. For polynomials $f,g\in\QQ[t]$ and $a\in\QQ$, the following three elementary rules hold:
\begin{enumerate}
	\item[(i)] $\log(fg)=\log(f)+\log(g)$,
	\item[(ii)] $\log(\frac{f}{g})=\log(f)-\log(g)$,
	\item[(iii)] $\log(f^a)=a\log(f)$.
\end{enumerate}

Using the properties (i), (ii) and (iii) we see that
\begin{align*}
	\log P_k(-t)\ &=\ \sum_{i=1}^\infty[\varepsilon_{2i-1}\log(1+(-1)^{2i-1}t^{2i-1})-\varepsilon_{2i}\log(1-(-1)^{2i}t^{2i})]\\
	&=\ \sum_{i=1}^\infty[-\frac{(2i-1)\varepsilon_{2i-1}t^{2i-2}}{1-t^{2i-1}}+\frac{(2i)\varepsilon_{2i}t^{2i}}{1-t^{2i}}]\\
	&=\ -\frac{\varepsilon_{1}}{1-t}+\frac{2\varepsilon_2t}{1-t^2}-\frac{3\varepsilon_{3}t^2}{1-t^3}+\cdots\\
	&=\ \sum_{i=0}^\infty(-1)^{i+1}(i+1)\varepsilon_{i+1}t^i\frac{1}{1-t^{i+1}}.
\end{align*}

Now, we use that $1/(1-t^{i+1})=\sum_{j=0}^\infty t^{(i+1)j}$ to obtain
\begin{align*}
	\log P_k(-t)\ &=\ \sum_{i=1}^\infty(-1)^{i+1}(i+1)\varepsilon_{i+1}t^i(\sum_{j=0}^\infty t^{(i+1)j})\\
	&=\ -\varepsilon_{1}+(-\varepsilon_{1}+2\varepsilon_2)t+(-\varepsilon_{1}-3\varepsilon_{3})t^2+(-\varepsilon_{1}+2\varepsilon_2+4\varepsilon_4)t^3+\cdots\\
	&=\ \sum_{i=0}^\infty(\sum_{j\ \textup{divides}\ i+1}(-1)^jj\varepsilon_j)t^i.
\end{align*}

Since $P_k(t)$ is rational by assumption, it follows that $\log P_k(-t)$ is also rational. Therefore, the power series $A(t)=(\log P_k(-t)+F'(t)+\sum_{i=0}^\infty\varepsilon_1t^i)t$ is again rational, and if we put $A(t)=\sum_{i=0}^\infty\alpha_it^i$, we see immediately that
$$
\alpha_i\ =\ \sum_{\substack{j\ \textup{divides}\ i\\ j\ne1,j\ne i}}(-1)^jj\varepsilon_j\quad\textup{for all odd integers}\ i.
$$

Since $\varepsilon_i>0$ for all $i\ge i_0$, it follows that for all odd integers $i\ge i_0$ we have $\alpha_i=0$ whenever $i$ is a prime number, and $\alpha_i<0$ if $i$ is not a prime, because the integers dividing an odd number are again odd. Since $A(t)$ is rational, this strange behavior of the $\alpha_i$ contradicts the Theorem \ref{Thm:Ma} of Mahler. The conclusion follows.
\end{proof}

\medskip
\noindent\textbf{Acknowledgment.}
A. Ficarra was partly supported by the Grant JDC2023-051705-I funded by
MICIU/AEI/10.13039/501100011033 and by the FSE+.

\end{document}